\tikzset{black node/.style={draw, circle, fill = black, minimum size = 5pt, inner sep = 0pt}}
\tikzset{normal/.style = {draw=none, fill = none, minimum size =0, rectangle}}
\newtheorem{theorem}{Theorem}[section]
\newaliascnt{lemma}{theorem}
\newtheorem{lemma}[lemma]{Lemma}
\newaliascnt{corollary}{theorem}
\newaliascnt{conjecture}{theorem}
\newtheorem{conjecture}[conjecture]{Conjecture}
\newaliascnt{claim}{theorem}
\newaliascnt{open}{theorem}
\newtheorem{open}[open]{Open Problem}
\renewcommand{\leq}{\leqslant}
\renewcommand{\geq}{\geqslant}
\newcommand{\treechi}{\mathsf{tree} \textnormal{-} \chi}
\newcommand{\pathchi}{\mathsf{path} \textnormal{-} \chi}
\begin{document}

\title{Notes on Tree- and Path-chromatic Number}

\author[T.~Huynh]{Tony Huynh}
\author[B.~Reed]{Bruce Reed}
\author[D.R.~Wood]{David R. Wood}
\author[L.~Yepremyan]{Liana Yepremyan}

\address[T.~Huynh and D.R.~Wood]{\newline School of Mathematics
\newline Monash University
\newline Melbourne, Australia}
\email{\{tony.bourbaki@gmail.com, david.wood@monash.edu\}}

\address[B.~Reed]{\newline School of Computer Science
\newline McGill University
\newline Montr\'eal, Canada}
\email{breed@cs.mcgill.ca}

\address[L.~Yepremyan]{\newline 
Department of Mathematics, Statistics, and Computer Science\newline
The University of Illinois at Chicago\newline
Chicago, USA \newline \newline
London School of Economics \newline 
Department of Mathematics \newline
London, UK}
\email{\{lyepre2@uic.edu, L.Yepremyan@lse.ac.uk\}}

\date{\today}
\sloppy

\begin{abstract}
\emph{Tree-chromatic number} is a chromatic version of treewidth, where the cost of a bag in a tree-decomposition is measured by its chromatic number rather than its size.  \emph{Path-chromatic number} is defined analogously. These parameters were introduced by Seymour~[JCTB~2016].  
In this paper, we survey all the known results on tree- and path-chromatic number and then present some new results and conjectures.  In particular, we propose a version of Hadwiger's Conjecture for tree-chromatic number.  As evidence that our conjecture may be more tractable than Hadwiger's Conjecture, we give a short proof that every $K_5$-minor-free graph has tree-chromatic number at most $4$, which avoids the Four Colour Theorem.  We also present some hardness results  and conjectures for computing tree- and path-chromatic number.
\end{abstract}

\maketitle

\section{Introduction}
\emph{Tree-chromatic number} is a hybrid of the graph parameters treewidth and chromatic number, recently introduced by Seymour~\cite{Seymour16}.  Here is the definition.

A \emph{tree-decomposition} of a graph $G$ is a pair $(T, \mathcal{B})$
where $T$ is a tree and $\mathcal{B}:=\{B_t \mid t \in V(T)\}$ is a collection of subsets of vertices of $G$, called \emph{bags}, satisfying:
\begin{itemize}
\item for each $uv \in E(G)$, there exists $t \in V(T)$ such that $u,v \in B_t$, and
\item for each $v \in V(G)$, the set of all $t \in V(T)$ such that $v \in B_t$ induces a non-empty subtree of $T$.  
\end{itemize}

% If $T$ is a path, then we say  a tree-decomposition $(T,\mathcal{B})$ of $G$ is  a {\em path-decomposition} of $G$. Since a path can be written as a sequence of vertices, we think of a path-decomposition of $G$ as a sequence of sets of vertices $B_1,B_2,\ldots,B_s$ such that 
% \begin{itemize}
% \item $V(G)= \bigcup_{1\le t \le s} B_t$, 
% \item for each $uv \in E(G)$, there exists $1\le t \le s$ such that $u,v \in B_t$, and
% \item for each $v \in V(G)$, the sets $B_i$ containing $v$ are consecutive in the sequence.
% \end{itemize}

A graph $G$ is \emph{$k$-colourable} if each vertex of $G$ can be assigned one of $k$ colours, such that adjacent vertices are assigned distinct colours. The \emph{chromatic number} of a graph $G$ is the minimum integer $k$ such that $G$ is $k$-colourable. 

For a tree-decomposition $(T,\mathcal{B})$ of $G$, the \emph{chromatic number} of $(T, \mathcal{B})$ is $\max \{\chi(G[B_t]) \mid t \in V(T)\}$.  The \emph{tree-chromatic number} of $G$, denoted $\treechi(G)$, is the minimum chromatic number taken over all tree-decompositions of $G$. The \emph{path-chromatic number} of $G$, denoted $\pathchi(G)$, is defined analogously, where we insist that $T$ is a path instead of an arbitrary tree. Henceforth, for a subset $B \subseteq V(G)$, we will abbreviate $\chi(G[B])$ by $\chi(B)$.  For $v \in V(G)$, let $N_G(v)$ be the set of neighbours of $v$ and $N_G[v] := N_G(v) \cup \{v\}$.  

The purpose of this paper is to  survey the known results on tree- and path-chromatic number, and to present some new results and conjectures.  

Clearly, $\treechi$ and $\pathchi$ are monotone under the subgraph relation, but unlike treewidth, they are not monotone under the minor relation.  For example, $\treechi(K_n)=n$, but the graph $G$ obtained by subdividing each edge of $K_n$ is bipartite and so $\treechi(G) \leq \chi(G)=2$. 

By definition, for every graph $G$, 
$$\treechi(G) \leq \pathchi(G) \leq \chi(G).$$
Section~\ref{sec:sep} reviews results that show that each of these inequalities can be strict and in fact, both of the pairs $(\treechi(G), \pathchi(G))$ and $(\pathchi(G), \chi(G))$ can be arbitrarily far apart.  

We present our new results and conjectures in Sections~\ref{sec:hadwiger}-\ref{sec:computing}. In Section~\ref{sec:hadwiger}, we propose a version of Hadwiger's Conjecture for tree-chromatic number and show how it is related to a `local' version of Hadwiger's Conjecture.  In Section~\ref{sec:k5}, we prove that $K_5$-minor-free graphs have tree-chromatic number at most $4$, without using the Four Colour Theorem.  We finish in Section~\ref{sec:computing}, by presenting some hardness results and conjectures for computing $\pathchi$ and $\treechi$.

\section{Separating $\chi$, $\pathchi$ and $\treechi$} \label{sec:sep}

Complete graphs are a class of graphs with unbounded tree-chromatic number.  Are there more interesting examples?  The following lemma of Seymour~\cite{Seymour16} leads to an answer.  A \emph{separation} $(A,B)$ of a graph $G$ is a pair of edge-disjoint subgraphs whose union is $G$.

\begin{lemma} \label{chromaticseparation}
For every graph $G$, there is a separation $(A,B)$ of $G$ such that $\chi(A \cap B) \leq \treechi(G)$ and
\[
\chi(A - V(B)), \chi(B - V(A)) \geq \chi(G)- \treechi(G).
\]
\end{lemma}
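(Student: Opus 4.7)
The plan is to take an optimal tree-decomposition of $G$ and use a sink argument on the underlying tree to locate an edge whose associated separation has the required properties. Set $k:=\treechi(G)$ and fix a tree-decomposition $(T,\mathcal{B})$ of $G$ with $\chi(B_t)\le k$ for every $t\in V(T)$. Each edge $tt'\in E(T)$ induces a natural separation $(A_{tt'},B_{tt'})$ of $G$: removing $tt'$ splits $T$ into subtrees $T_1\ni t$ and $T_2\ni t'$, and taking $V_i:=\bigcup_{s\in V(T_i)}B_s$ and distributing the edges of $G$ between the two sides in any convenient way produces the separation. Standard tree-decomposition properties give $V(A_{tt'})\cap V(B_{tt'})\subseteq B_t\cap B_{t'}$ and forbid any edge of $G$ between the ``pure'' sides $V_1\setminus V_2$ and $V_2\setminus V_1$. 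In particular $\chi(A_{tt'}\cap B_{tt'})\le\chi(B_t)\le k$ for every edge, so the task reduces to finding an edge $tt'$ with $\chi(V_1\setminus V_2)$ \emph{and} $\chi(V_2\setminus V_1)$ both at least $\chi(G)-k$.

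I would then argue by contradiction: suppose that for every $tt'\in E(T)$ at least one of the two pure sides is \emph{small}, meaning its chromatic number is strictly less than $\chi(G)-k$. Orient each edge of $T$ away from a designated small side. Any orientation of a finite tree has a vertex of out-degree zero, so fix such a sink $t$; every edge incident to $t$ is incoming, which means that for every neighbour $t'$ of $t$, the $t'$-component $C_{t'}$ of $T-t$ is the small side. Writing $W_{t'}:=\bigl(\bigcup_{s\in V(C_{t'})}B_s\bigr)\setminus B_t$, this yields $\chi(W_{t'})\le\chi(G)-k-1$ for every neighbour $t'$ of $t$.

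To derive the contradiction, I would observe that $V(G)$ is the disjoint union of $B_t$ and the sets $W_{t'}$ over neighbours $t'$ of $t$, and that $G$ has no edge between two distinct $W_{t'}$'s: every edge of $G$ lies in some bag, and each bag is either $B_t$ itself or sits entirely in one component of $T-t$. Combining any proper $k$-colouring of $G[B_t]$ with a disjoint palette of $\chi(G)-k-1$ colours used in parallel to properly colour each $G[W_{t'}]$ then yields a proper $(\chi(G)-1)$-colouring of $G$; the cross-edges between $B_t$ and each $W_{t'}$ are handled automatically because the palettes are disjoint. This contradicts the definition of $\chi(G)$. The main obstacle is the orientation/sink step: one has to pick the ``small-side'' labels so that \emph{all} branches at the sink are simultaneously small, but once this is correctly set up the colouring count $k+(\chi(G)-k-1)=\chi(G)-1$ falls out immediately.
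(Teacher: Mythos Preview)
The paper does not include its own proof of this lemma; it is quoted from Seymour~\cite{Seymour16} without argument, so there is nothing to compare your approach against. Your proof is correct and is the standard one: orient each tree edge toward its non-small side, find a sink $t$, and combine a $k$-colouring of $B_t$ with a common $(\chi(G)-k-1)$-palette on the pairwise non-adjacent branches $W_{t'}$. The only case not explicitly covered is $|V(T)|=1$, where there are no edges to orient and the final colouring uses $k$ colours rather than $\chi(G)-1$; but then $\chi(G)\le k$, so $\chi(G)-\treechi(G)\le 0$ and any separation with one empty side already satisfies the conclusion (alternatively, just duplicate the unique bag so that $T$ has an edge).
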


Seymour~\cite{Seymour16} noted that Lemma~\ref{chromaticseparation} shows that the random construction of Erd\H{o}s~\cite{erdos59} of graphs with large girth and large chromatic number also have large tree-chromatic number with high probability.  

Interestingly, it is unclear if the known \emph{explicit} constructions of large girth, large chromatic graphs also have large tree-chromatic number.  For example, \emph{shift graphs} are one of the classic constructions of triangle-free graphs with unbounded chromatic number, as first noted in~\cite{EH68}.  The vertices of the $n$-th shift graph $S_n$ are all intervals of the form $[a,b]$, where $a$ and $b$ are integers satisfying $1 \leq a < b \leq n$. Two intervals $[a,b]$ and $[c,d]$ are adjacent if and only if $b=c$ or $d=a$.  The following lemma (first noted in~\cite{Seymour16}) shows that the gap between $\chi$ and $\pathchi$ is unbounded on the class of shift graphs. 

\begin{lemma} \label{shift}
For all $n \in \mathbb N$, $\pathchi(S_n)=2$ and $\chi(S_n) \geq \lceil \log_2 n \rceil$.  
\end{lemma}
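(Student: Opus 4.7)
The plan is to verify the two bounds separately. The upper bound $\pathchi(S_n) \leq 2$ is the main content; the other parts are short.

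For $\pathchi(S_n) \leq 2$, I would exhibit a path-decomposition $(P, \mathcal{B})$ in which every bag induces a bipartite subgraph. Take $P$ to be the path with vertex sequence $1, 2, \ldots, n$, and for each $i \in \{1, \ldots, n\}$ set
\[
B_i := \{[a,b] \in V(S_n) : a \leq i \leq b\}.
\]
Checking that $(P, \mathcal{B})$ is a valid path-decomposition is routine: vertex $[a,b]$ occupies the contiguous subpath $a, a{+}1, \ldots, b$, and any edge $[a,b]\sim[b,d]$ (with $a<b<d$) has both endpoints in $B_b$. The key observation is that $S_n[B_i]$ is bipartite. Indeed, if $[a,b]\sim[c,d]$ is an edge of $S_n$ with both endpoints in $B_i$, adjacency forces two of the four coordinates to coincide; say $b=c$. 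Membership in $B_i$ then gives $b \leq i$ and $i \leq c$, so $b = c = i$. Hence every edge inside $B_i$ joins a vertex of the form $[a,i]$ to one of the form $[i,d]$, giving a bipartition of the non-isolated vertices of $S_n[B_i]$. The reverse inequality $\pathchi(S_n) \geq 2$ is immediate once $S_n$ has at least one edge.

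For $\chi(S_n) \geq \lceil \log_2 n \rceil$, the plan is a standard set-distinctness argument. Given a proper $k$-colouring $f \colon V(S_n) \to \{1, \ldots, k\}$, define, for each $i \in \{1, \ldots, n-1\}$,
\[
A_i := \{f([i,j]) : i < j \leq n\}.
\]
Each $A_i$ is non-empty. I claim that $A_1, \ldots, A_{n-1}$ are pairwise distinct: if $i < i'$, then $f([i,i'])$ lies in $A_i$, while the edge $[i,i'] \sim [i',j]$ (for every $j > i'$) forces $f([i,i']) \neq f([i',j])$, so $f([i,i'])$ is excluded from $A_{i'}$. Thus $A_i \neq A_{i'}$. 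We obtain $n-1$ distinct non-empty subsets of $\{1, \ldots, k\}$, so $n-1 \leq 2^k - 1$, giving $k \geq \lceil \log_2 n \rceil$.

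The only mildly delicate step is verifying bipartiteness of the bags $S_n[B_i]$: one might worry that vertices $[a,b]$ with $a < i < b$ contribute extra edges inside $B_i$, but the shared-endpoint requirement in the definition of $S_n$ forces every neighbour of such a vertex to lie outside $B_i$. Once this is noticed, both bounds are short.
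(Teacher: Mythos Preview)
Your proof is correct and follows essentially the same approach as the paper. The path-decomposition $B_i = \{[a,b] : a \le i \le b\}$ is identical, your bipartiteness argument is a slight rephrasing of the paper's bipartition $X_i = \{[a,b]\in B_i : b=i\}$, $Y_i = B_i\setminus X_i$, and your chromatic-number lower bound is the mirror image of the paper's (you index colour sets by the left endpoint, the paper by the right endpoint).
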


\begin{proof}
The fact that $\chi(S_n) \geq \lceil \log_2 n \rceil$ is well-known; we include the proof for completeness.  Let $\ell=\chi(S_n)$ and $\phi: V(S_n) \to [\ell]$ be a proper $\ell$-colouring of $S_n$.  For each $j \in [n]$ let $C_j=\{\phi([i,j]) \mid i < j\}$.  We claim that for all $j<k$, $C_j \neq C_k$.  By definition, $\phi([j,k]) \in C_k$.  If $C_j=C_k$, then $\phi([i,j])=\phi([j,k])$ for some $i<j$.  But this is a contradiction, since $[i,j]$ and $[j,k]$ are adjacent in $S_n$. Since there are $2^\ell$ subsets of $[\ell]$, $2^\ell \geq n$, as required.  

We now show that $\pathchi(S_n)=2$.  For each $i \in [n]$, let $B_i=\{[a,b] \in V(S_n) \mid a \leq i \leq b \}$.  Let $P_n$ be the path with vertex set $[n]$ (labelled in the obvious way).  We claim that $(P_n, \{B_i \mid i \in [n]\})$ is a path-decomposition of $S_n$.  First observe that $[a,b] \in B_i$ if and only if $a \leq i \leq b$. Next, for each edge $[a,b][b,c] \in E(S_n)$, $[a,b],[b,c] \in B_b$.  Finally, observe that for all $i \in [n]$, $X_i=\{[a,b] \in B_i \mid b=i\}$ and $Y_i=\{[a,b] \in B_i \mid b >i \}$ is a bipartition of $S_n[B_i]$.  Therefore, $S_n$ has path-chromatic number $2$, as required. 
\end{proof}

Given that shift graphs contain large complete bipartite subgraphs, the following question naturally arises. 

\begin{open}
Does there exist a function $f: \mathbb N \times \mathbb N \to \mathbb N$ such that for all $s \in \mathbb N$ and all $K_{s,s}$-free graphs $G$, $\chi(G) \leq f(s, \treechi(G) )$?
\end{open}

It is not obvious that the parameters $\pathchi$ and $\treechi$ are actually different.  Indeed, Seymour~\cite{Seymour16} asked if $\pathchi(G)=\treechi(G)$ for all graphs $G$?   Huynh and Kim~\cite{HK17} answered the question in the negative by exhibiting for each $k \in \mathbb{N}$, an infinite family of $k$-connected graphs for which $\treechi(G)+1=\pathchi(G)$. They also prove that the Mycielski graphs~\cite{Mycielski55} have unbounded path-chromatic number.

However, can $\treechi(G)$ and $\pathchi(G)$ be arbitrarily far apart?  Seymour~\cite{Seymour16} suggested the following family as a potential candidate. Let $T_n$ be the complete binary rooted tree with $2^n$ leaves. A path $P$ in $T_n$ is called a $\mathsf V$ if the vertex of $P$ closest to the root (which we call the \emph{low point} of the $\mathsf V$) is an internal vertex of $P$.  Let $G_n$ be the graph whose vertices are the $\mathsf V$s of $T_n$, where two $\mathsf V$s are adjacent if the low point of one is an endpoint of the other.  

\begin{lemma}[\cite{Seymour16}] 
\label{septreechi}
For all $n \in \mathbb N$, $\treechi(G_n)=2$ and 
$\chi(G_n)\geq \lceil \log_2 n \rceil$.
%. Moreover, $\lim_{n \to \infty} \chi(G_n)=\infty$.
\end{lemma}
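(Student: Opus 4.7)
The plan is to prove the two assertions independently. For the upper bound $\treechi(G_n) \le 2$, I would take the tree of the decomposition to be $T_n$ itself. For each $t \in V(T_n)$, define the bag $B_t$ to be the set of all $\mathsf{V}$s whose underlying path in $T_n$ passes through $t$. Since the path of any single $\mathsf{V}$ is a subpath of $T_n$, the set of bags containing a given $\mathsf{V}$ forms a subtree (indeed a subpath) of $T_n$. Moreover, every edge of $G_n$ arises from two $\mathsf{V}$s such that the low point of one is an endpoint of the other; both of these $\mathsf{V}$s contain that shared vertex on their path, so the edge is witnessed by the corresponding bag.

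To bound the chromatic number of $G_n[B_t]$, partition $B_t$ into three classes according to the role of $t$ in each $\mathsf{V}$: $t$ is the low point, an endpoint, or an interior vertex of one of the two arms. A short case analysis based on ancestor/descendant relationships in $T_n$ shows that the only adjacencies between elements of $B_t$ are those where $t$ is the low point of one $\mathsf{V}$ and an endpoint of the other. Hence $G_n[B_t]$ is bipartite for every $t$, giving $\treechi(G_n) \le 2$; the matching lower bound is immediate since $G_n$ has edges for $n \ge 2$. The bulk of the work here is the case analysis itself, and the subtlest point is verifying that two $\mathsf{V}$s both containing $t$ as an interior vertex of an arm cannot be adjacent, because any candidate witness to such an adjacency would have to lie simultaneously in the subtree of $T_n$ containing $t$ and in a disjoint subtree rooted at a sibling.

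For the lower bound $\chi(G_n) \ge \lceil \log_2 n \rceil$, I would exhibit a shift graph as an induced subgraph of $G_n$. Fix a root-to-leaf path $p_0, p_1, \ldots, p_n$ of $T_n$, and for each $i \in \{0, \ldots, n-1\}$ fix a descendant $q_i$ of the child of $p_i$ not lying on this path. For all $0 \le i < j \le n$ with $i \le n-1$, let $V_{ij}$ be the $\mathsf{V}$ with low point $p_i$, one arm going from $p_i$ along the chosen path down to $p_j$, and the other arm ending at $q_i$. A direct computation, using the fact that no $q_i$ lies on the chosen path, shows that $V_{ij}$ and $V_{i'j'}$ are adjacent in $G_n$ if and only if $i = j'$ or $j = i'$. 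Hence these $\mathsf{V}$s induce a copy of the shift graph $S_{n+1}$ in $G_n$, and Lemma~\ref{shift} yields $\chi(G_n) \ge \chi(S_{n+1}) \ge \lceil \log_2(n+1) \rceil \ge \lceil \log_2 n \rceil$.
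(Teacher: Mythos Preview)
Your proof is correct and follows essentially the same approach as the paper: the same tree-decomposition indexed by $T_n$ with $B_t$ the set of $\mathsf V$s through $t$, bipartiteness of each bag via the role of $t$, and an embedding of a shift graph for the chromatic lower bound. The only cosmetic differences are that the paper uses the coarser two-class partition $X_t=\{\text{low point at }t\}$ versus $Y_t=B_t\setminus X_t$ (your ``endpoint'' and ``interior'' classes merge into $Y_t$, which is harmless since you show the interior class is isolated in $G_n[B_t]$), and the paper simply asserts the shift-graph containment whereas you spell out an explicit embedding yielding $S_{n+1}$.
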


\begin{proof}
For each $t \in V(T_n)$, let $B_t$ be the set of $\mathsf V$s in $T_n$ which contain $t$.  We claim that $(T_n, \{B_t \mid t \in V(T_n)\})$ is a tree-decomposition of $G_n$ with chromatic number $2$.  First observe that if $P$ is a $\mathsf V$, then $\{t \in V(T_n) \mid P \in B_t\}=V(P)$, which induces a non-empty subtree of $T_n$.  Next, if $P_1$ and $P_2$ are adjacent $\mathsf V$s with $V(P_1) \cap V(P_2)=\{t\}$, then $P_1, P_2 \in B_t$.  Finally, for each $t \in B_t$, let $X_t$ be the elements of $B_t$ whose low point is $t$ and let $Y_t := B_t \setminus X_t$.  Then $(X_t, Y_t)$ is a bipartition of $G_n[B_t]$, implying that $\treechi(G_n)=2$. 

For the second claim, it is easy to see that $G_n$ contains a subgraph isomorphic to the $n$-th shift graph $S_n$.  Thus, $\chi(G_n) \geq \chi(S_n) \geq  \lceil \log_2 n \rceil$, by Lemma~\ref{shift}.
\end{proof}

Barrera-Cruz, Felsner,  M\'{e}sz\'{a}ros, Micek, Smith, Taylor, and Trotter~\cite{BFMMSTT19} subsequently proved that $\pathchi(G_n)=2$ for all $n \in \mathbb N$.  However, with a slight modification of the definition of $G_n$, they were able to
construct a family of graphs with tree-chromatic number 2 and unbounded path-chromatic number. 

\begin{theorem}[\cite{BFMMSTT19}] \label{septreepath}
For each integer $n \geq 2$, there exists a graph $H_n$ with $\treechi(H_n)=2$ and $\pathchi(H_n)=n$.  
\end{theorem}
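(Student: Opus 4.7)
The plan is to construct $H_n$ as a carefully chosen modification of the graph $G_n$ from \autoref{septreechi}: the modification should preserve the bipartite tree-decomposition of $G_n$ while destroying the bipartite path-decomposition that \cite{BFMMSTT19} found for $G_n$. My best guess is to keep a rooted binary tree $T$ (or a tree-of-trees) and its $\mathsf V$s as the vertex set, but enrich the adjacency rule with additional ``shift''-type or cross-branch edges that are invisible to a tree-decomposition along $T$ yet visible to any linear ordering of $V(T)$. For the upper bound $\treechi(H_n) \leq 2$, I would imitate the proof of \autoref{septreechi}: use $T$ as the decomposition tree, set $B_t := \{P \in V(H_n) : t \in V(P)\}$, and at each $t$ partition $B_t$ into $X_t = \{P \in B_t : t \text{ is the low point of } P\}$ and $Y_t = B_t \setminus X_t$. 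Provided the new edges are designed so that every edge still joins a low-point vertex to a non-low-point vertex at their common node, $(X_t, Y_t)$ remains a bipartition of $H_n[B_t]$, and the subtree condition is automatic since each $\mathsf V$ induces a subpath of $T$.

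For the lower bound $\pathchi(H_n) \geq n$, I would proceed by induction on $n$, with a simple base case such as $H_2 = K_2$. Given a hypothetical path-decomposition $(Q, \mathcal B)$ of $H_n$ in which every bag has chromatic number at most $n-1$, I would exploit the branching of $T$ to locate two vertex-disjoint induced subgraphs of $H_n$, each containing a copy of $H_{n-1}$, whose vertex sets occupy disjoint intervals of the linear order on $V(Q)$. Applying the inductive hypothesis to each piece would produce two bags witnessing chromatic number $n-1$, and the additional cross-adjacencies between the two pieces would force some intermediate bag to contain, on top of an $(n-1)$-chromatic induced subgraph, a vertex adjacent to every colour class of it, pushing the chromatic number of that bag to $n$ and giving the desired contradiction.

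The main obstacle is producing the right extra adjacencies. The failure of $G_n$ itself (shown in \cite{BFMMSTT19}) indicates that no naive modification works: $G_n$ already admits a path-decomposition with bipartite bags, so the ``low-point'' adjacency rule alone is too flexible. The new edges must be strong enough that in any linear ordering of $V(T)$ some interval captures both sides of a growing chromatic obstruction, yet weak enough that in the tree-decomposition along $T$ they are absorbed by the low-point/non-low-point bipartition at each bag. Identifying a construction that strikes this balance, and then formalising precisely how the two inductive $(n-1)$-chromatic substructures combine across a path-decomposition into an $n$-chromatic bag, is the heart of the argument.
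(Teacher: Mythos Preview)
The paper does not actually prove this theorem; it quotes the result from \cite{BFMMSTT19}, states the construction of $H_n$, and remarks that the lower bound ``uses Ramsey theoretical methods for trees developed by Milliken~\cite{Milliken79}.'' So the comparison is against that description.

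Your guess at the construction is off in a structurally important way. You propose to keep the $\mathsf V$s as the vertex set and add extra \emph{edges}. The actual $H_n$ adds extra \emph{vertices}: one also takes all $\mathsf Y$s of $T_n$, where a $\mathsf Y$ is a subtree with three leaves whose lowest vertex (closest to the root) is one of those leaves. Adjacency between two $\mathsf V$s is as in $G_n$; two $\mathsf Y$s are adjacent if the lowest leaf of one is an upper leaf of the other; and a $\mathsf V$ is adjacent to a $\mathsf Y$ if the low point of the $\mathsf V$ is an upper leaf of the $\mathsf Y$. Your tree-decomposition scheme then works essentially as you describe: take $B_t=\{S\in V(H_n): t\in V(S)\}$ and bipartition each $B_t$ into those $S$ for which $t$ is the low point (for a $\mathsf V$) or the lowest leaf (for a $\mathsf Y$), versus the rest. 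So the upper bound $\treechi(H_n)\le 2$ is fine once the construction is in hand.

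The real gap is the lower bound. Your plan is an induction that, in any path-decomposition, locates two disjoint copies of $H_{n-1}$ in disjoint intervals and then uses a cross-edge to bump the chromatic number of an intermediate bag. The cited proof does not proceed this way: it invokes Milliken's partition theorem for strongly embedded subtrees, a genuine Ramsey result, to control how any linear ordering of the decomposition interacts with the branching of $T_n$. The appeal to Milliken is a signal that a bare induction of the kind you sketch is not known to suffice; in particular, you give no mechanism guaranteeing that the two $H_{n-1}$ copies land in \emph{disjoint} intervals of the path, nor that the ``cross-adjacencies'' hit every colour class of an $(n-1)$-chromatic bag. Absent such a mechanism (or a replacement for the Ramsey step), the lower-bound argument is incomplete.
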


The definition of $H_n$ is as follows.  A subtree of the complete binary tree $T_n$ is called a $\mathsf Y$ if it has three leaves and the vertex of the $\mathsf Y$ closest to the root of $T_n$ is one of its three leaves.  The vertices of $H_n$ are the $\mathsf V$s and $\mathsf Y$s of $T_n$.  Two $\mathsf V$s are adjacent if the low point of one is an endpoint of the other.  Two $\mathsf Y$s are adjacent if the lowest leaf of one is an upper leaf of the other.  A $\mathsf V$ is adjacent to a $\mathsf Y$ if the low point of the $\mathsf V$ is an upper leaf of the $\mathsf Y$.  The proof that $\pathchi(H_n)=n$ uses Ramsey theoretical methods for trees developed by Milliken~\cite{Milliken79}.

\section{Hadwiger's Conjecture for $\treechi$ and $\pathchi$} 
\label{sec:hadwiger}

 One could hope that difficult conjectures involving $\chi$ might become tractable for $\treechi$ or $\pathchi$, thereby providing insightful intermediate results.  Indeed, the original motivation for introducing $\treechi$ was a conjecture of Gy\'{a}rf\'{a}s~\cite{gyarfas85} from 1985, on $\chi$-boundedness of triangle-free graphs without long holes 
 \footnote{A \emph{hole} in a graph is an induced cycle of length at least $4$.}.
 
 \begin{conjecture}[Gy\'{a}rf\'{a}s's Conjecture~\cite{gyarfas85}] \label{conj:gyarfas}
 For every integer $\ell$, there exists $c$ such that every triangle-free graph with no hole of length greater than $\ell$ has chromatic number at most $c$.
 \end{conjecture}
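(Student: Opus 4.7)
I would attempt a proof by induction on $\ell$, with the inductive step reducing the hole-length bound via a BFS-based layering. The base case $\ell = 3$ is immediate: a triangle-free graph with no induced cycle of length at least $4$ contains no induced cycle at all, so it is a forest and has $\chi \le 2$; set $c(3) = 2$.

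For the inductive step, fix $\ell \ge 4$, set $c' := c(\ell - 2)$, and let $G$ be triangle-free with no hole of length greater than $\ell$. Let $L_0, L_1, L_2, \dots$ be the BFS layers of $G$ rooted at an arbitrary vertex. Since every edge of $G$ lies within a single $L_i$ or between consecutive layers, a proper colouring of each $G[L_i]$ combined with the parity of $i$ yields $\chi(G) \le 2 \max_i \chi(G[L_i])$. The induction then closes, provided we can establish the following key claim: for every $i$, the triangle-free graph $G[L_i]$ has no hole of length greater than $\ell - 2$. The inductive hypothesis applied to $G[L_i]$ gives $\chi(G[L_i]) \le c'$, hence $c(\ell) \le 2c'$, iterating to an overall bound $c(\ell) = O(2^{\ell/2})$.

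To attack the key claim, I would try to lift any long induced cycle inside a layer into a strictly longer induced cycle in $G$ itself. Fix a BFS tree $T$ rooted at $v$ and let $C = a_1 a_2 \cdots a_m$ be an induced cycle in $G[L_i]$, with $p_j \in L_{i-1}$ the $T$-parent of $a_j$. Triangle-freeness forces $p_1 \ne p_2$ (otherwise $p_1 a_1 a_2$ is a triangle). Let $z$ be the lowest common ancestor of $p_1, p_2$ in $T$ and let $Q$ be the $T$-path from $p_1$ through $z$ to $p_2$. The closed walk that traverses $Q$ and then the long arc $p_2, a_2, a_3, \dots, a_m, a_1, p_1$ has length $m + 2 + 2(i - \mathrm{level}(z))$, which is strictly greater than $m$. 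By the BFS property, every internal vertex of $Q$ lies at level strictly less than $i - 1$ and therefore has no neighbour in $L_i$, ruling out chords from the interior of $Q$ to $\{a_3, \dots, a_m\}$. The only potentially surviving chords are between $\{p_1, p_2\}$ and $\{a_3, \dots, a_m\}$.

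\emph{Main obstacle}: Resolving these remaining chords while preserving an induced cycle of length strictly greater than $m$. A chord $p_1 a_k$, say, lets one replace the arc $a_1, a_2, \dots, a_k$ by the single edge $p_1 a_k$, which shortens the candidate cycle and may push its length below $m$, invalidating the lift. I would handle this by (i) choosing $C$ of maximum length in $G[L_i]$, (ii) choosing the BFS tree $T$ adaptively so as to minimise chords of this type, and (iii) if necessary iterating the lifting to higher ancestors in $T$, so that the residual chord-free portion still produces an induced cycle of length greater than $m$ in $G$. Such a cycle must then have length at most $\ell$, forcing $m \le \ell - 2$ and closing the induction. The delicate interplay between triangle-freeness and chord control at BFS layer boundaries is the principal technical hurdle of this strategy, and is where the full proof is likely to require the kind of sophisticated levelling analysis that has appeared in the structural $\chi$-boundedness literature.
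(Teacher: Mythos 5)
The statement you are trying to prove is labelled a \emph{conjecture} in this paper precisely because the paper does not prove it; the authors state Gy\'arf\'as's Conjecture, prove only the much weaker tree-chromatic analogue (Theorem~\ref{longhole1}: $\treechi(G)\leq d(\ell-2)$ when $\chi(N_G(v))\leq d$ for all $v$), and cite Chudnovsky, Scott, and Seymour~\cite{CSS17} for the actual resolution of the $\chi$-bounded version. So there is no proof in the paper to compare against; the relevant comparison is to the CSS argument, which is substantially more involved than a BFS layering and does not yield an exponential bound in $\ell$.

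More importantly, the key claim that drives your induction is false. You assert that if $G$ is triangle-free with no hole longer than $\ell$, then for every BFS layer $L_i$ the graph $G[L_i]$ has no hole longer than $\ell-2$. The Petersen graph is a counterexample. It is triangle-free and its longest hole has length $6$, so $\ell=6$. Root a BFS anywhere (say at a vertex $r$); then $L_0=\{r\}$, $L_1=N(r)$ has three pairwise non-adjacent vertices, and $L_2$ consists of the remaining six vertices, which induce a $C_6$. Thus $G[L_2]$ contains a hole of length $6>\ell-2=4$. By vertex-transitivity this happens for every choice of root, so choosing the BFS tree ``adaptively'' cannot rescue the claim. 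The failure mode is exactly the one you flagged as the ``main obstacle'' (chords from the parents $p_1,p_2$ down into the cycle), but it is not a technicality to be engineered around: the intended lemma is simply not true, and there is no induced cycle in $G$ strictly longer than the one sitting in the layer.

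There is also a secondary issue with the induction hypothesis. You apply it to $G[L_i]$ with parameter $\ell-2$, but even if the layer-hole-length bound held, you would need $\ell-2\geq 3$ for the hypothesis to be meaningful at all, and for $\ell=4,5$ the reduction $\ell\mapsto\ell-2$ drops you to $2$ or $3$, where ``no hole of length $>2$'' is vacuously wrong (every graph satisfies it trivially, since holes have length $\geq 4$). These are fixable bookkeeping matters, but they compound the main problem: the BFS-layer recursion cannot decrease the hole-length parameter, and a correct proof has to be far less local. The fact that CSS's proof gives bounds far worse than $2^{\ell/2}$, and that even the intermediate tree-chromatic bound in this paper required the non-trivial Theorem~\ref{longhole1}, is a strong sign that no clean two-step-per-level descent exists.
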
 
 
  Seymour~\cite{Seymour16} proved that Conjecture~\ref{conj:gyarfas} holds with $\chi$ replaced by $\treechi$.
 
 \begin{theorem}[\cite{Seymour16}] 
 \label{longhole1}
 For all integers $d \geq 1$ and $\ell \geq 4$, if $G$ is a graph with no hole of length greater than $\ell$ and $\chi(N_G(v)) \leq d$ for all $v \in V(G)$, then $\treechi(G) \leq d(\ell-2)$.  
 \end{theorem}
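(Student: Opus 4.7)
The plan is to build a tree-decomposition of $G$ from a breadth-first search. Fix an arbitrary root $r \in V(G)$, let $T$ be a BFS spanning tree rooted at $r$, and for each $u \in V(G)$ at depth $i$ in $T$ let $P_u = (r = v_0, v_1, \ldots, v_i = u)$ denote the unique $r$--$u$ path in $T$. I would define the bag
\[
B_u := \bigcup_{j = \max(0,\, i - \ell + 3)}^{i} N_G(v_j),
\]
i.e.\ the union of the open neighbourhoods of the last $\ell - 2$ vertices on $P_u$, and take the tree of the decomposition to be $T$ itself.

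Two of the three verifications are routine. For the chromatic bound, each $N_G(v_j)$ is properly $d$-colourable by hypothesis, and the bag is a union of at most $\ell - 2$ such neighbourhoods, so $\chi(B_u) \le d(\ell - 2)$. For edge coverage, given $xy \in E(G)$ with $\mathrm{depth}(y) \ge \mathrm{depth}(x)$, note that $x \in N_G(y) \subseteq B_y$; also, since $\ell \ge 4$, the vertex $v_{i-1}$ on $P_y$ lies in the tail window, whence $y = v_i \in N_G(v_{i-1}) \subseteq B_y$, so $B_y$ contains both endpoints of the edge.

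The main obstacle is the connected-subtree axiom: $U_z := \{u \in V(T) : z \in B_u\}$ must induce a subtree of $T$ for every $z \in V(G)$. This is where the no-long-hole hypothesis is essential. Suppose for contradiction that $U_z$ is disconnected, witnessed by $u_1, u_2 \in U_z$ with some $u$ on the $u_1$--$u_2$ path of $T$ satisfying $z \notin B_u$. By definition $z$ has neighbours $y_1 \in P_{u_1}$ and $y_2 \in P_{u_2}$ lying in the $(\ell-2)$-tails of their paths; choose them as deep as possible, and let $w$ denote the lowest common ancestor of $u_1$ and $u_2$. The tree path from $y_1$ up to $w$ and down to $y_2$, closed by the edges $y_1 z$ and $z y_2$, is a cycle in $G$. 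A chord-shortening argument, leveraging the BFS property that depths are monotone along BFS paths, should produce an induced subcycle whose length exceeds $\ell$ whenever $z$ fails to lie in an intermediate bag, contradicting the hypothesis and forcing $z \in B_u$ for every intermediate $u$.

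I expect this chord analysis to be the hard part: one must rule out short-cuts through ancestral vertices or through cross-edges among the $v_j$'s while still producing an induced cycle that is too long. It may be necessary to slightly modify the bags (for example, by including the $v_j$ themselves, or by canonically selecting $y_1, y_2$ via BFS priority) so that chords of the candidate cycle always either violate induction or witness $z$'s membership in the intermediate bag. Once the chord analysis is settled, the three tree-decomposition axioms hold simultaneously, and the bound $\treechi(G) \le d(\ell - 2)$ is immediate from the per-bag chromatic estimate.
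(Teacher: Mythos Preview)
The paper does not give its own proof of this theorem; it is quoted from Seymour's original article, so there is no in-paper argument to compare against. That said, your construction as written is not a tree-decomposition: the connected-subtree axiom already fails on the $4$-cycle.

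Take $G=C_4$ with vertices $r,y_1,z,y_2$ in cyclic order, and set $\ell=4$, $d=1$; the hypotheses hold (the only hole has length $4$, and every open neighbourhood is a stable set), and the asserted bound $\treechi(G)\le 2$ is of course true. Rooting the BFS at $r$ gives layers $\{r\},\{y_1,y_2\},\{z\}$, and a BFS tree $T$ with edges $ry_1,\,ry_2,\,y_1z$ (say). With a tail of $\ell-2=2$ vertices your recipe yields
\[
B_r=N(r)=\{y_1,y_2\},\qquad B_{y_1}=B_{y_2}=B_z=V(G).
\]
Thus $z\in B_{y_1}\cap B_{y_2}$ but $z\notin B_r$, and since the $y_1$--$y_2$ path in $T$ passes through $r$, the set $\{u:z\in B_u\}=\{y_1,y_2,z\}$ is disconnected in $T$. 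Your chord-shortening plan cannot repair this: the candidate cycle $z\,y_1\,r\,y_2\,z$ is already induced and has length exactly $\ell=4$, so no hole of length greater than $\ell$ is available to contradict the hypothesis.

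The defect is structural rather than cosmetic. Whenever the BFS tree branches at a vertex $w$ and $z$ has one neighbour in each branch at depth $\mathrm{depth}(w)+1$, the tree cycle through $w$ has length $4\le\ell$, so the no-long-hole hypothesis gives nothing, yet $B_w$ need not contain $z$. Enlarging the tail window, or replacing $N(v_j)$ by $N[v_j]$, does not help: $B_r$ still omits $z$ in the $C_4$ example. Seymour's original argument avoids this branching issue altogether by using a \emph{path}-decomposition whose bags are unions of $\ell-2$ consecutive BFS layers (so membership of any vertex is an interval of indices and the subtree axiom is automatic); the no-long-hole hypothesis is used instead to bound the chromatic number of each layer by $d$. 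If you want to salvage a BFS-tree-indexed decomposition, the bags would have to be redesigned so that the branch-point bag $B_w$ is forced to pick up any vertex with neighbours in two distinct subtrees below $w$, which your present definition does not do.
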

 
 Note that Theorem~\ref{longhole1} with $d=1$ implies that $\treechi(G) \leq \ell-2$ for every triangle-free graph $G$ with no hole of length greater than $\ell$. A proof of Gy\'{a}rf\'{a}s's Conjecture~\cite{gyarfas85} (among other results) was subsequently given by Chudnovsky, Scott, and Seymour~\cite{CSS17}.

The following is another famous conjectured upper bound on $\chi$, due to Hadwiger~\cite{Hadwiger43}; see \cite{SeymourHC} for a survey. 

\begin{conjecture}[\cite{Hadwiger43}]
If $G$ is a graph without a  $K_{t+1}$-minor, then $\chi(G) \leq t$.
\end{conjecture}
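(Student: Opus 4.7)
This is Hadwiger's Conjecture, one of the deepest open problems in graph colouring, so any honest proof plan must acknowledge its history rather than claim novelty. The approach I would attempt mirrors the path taken for the known cases, proceeding by induction on $t$. For $t \leq 3$ the bound is classical: a minimum counterexample for $t=3$ has minimum degree at least $3$ and is $K_4$-minor-free, hence series-parallel by Dirac's theorem, contradicting the existence of such a vertex. For $t=4$, one invokes Wagner's structure theorem for $K_5$-minor-free graphs (clique-sums of planar graphs and the Wagner graph $V_8$), reducing four-colourability to the Four Colour Theorem. Robertson, Seymour, and Thomas extended this to $t=5$ via an analogous clique-sum decomposition of $K_6$-minor-free graphs.

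For general $t$, I would take a minimum counterexample $G$. By criticality, $\delta(G) \geq t$ and $G$ is $t$-connected, and one can further assume $G$ has no small clique cutset (else reduce to smaller instances using the natural chromatic behaviour of clique-sums). The Kostochka--Thomason density theorem already gives $\chi(G) = O(t \sqrt{\log t})$, so the task reduces to extracting a $K_{t+1}$-model from structural refinements of this dense-subgraph information. The most natural route is to locate a vertex $v$ of large degree together with $t$ pairwise disjoint connected subgraphs in $N_G(v)$ that are mutually linked by paths avoiding $v$, thereby producing the desired minor; alternatively, one aims for a small separator enabling a clique-sum decomposition and then appeals to induction.

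The main obstacle is precisely this last step: bridging the $\sqrt{\log t}$ gap between the Kostochka--Thomason density bound and the conjectured chromatic bound. Every attempt to squeeze a $K_{t+1}$-minor directly out of minimum degree $\Omega(t)$ has failed, and recent work of Norin, Postle, and Song only shrinks the chromatic bound to $t(\log t)^{o(1)}$. Given the theme of this paper, a more realistic interim target is to prove the analogous statement with $\chi$ replaced by $\treechi$, as proposed in Section~\ref{sec:hadwiger}; a tree-decomposition with small-chromatic bags should be easier to construct from a $K_{t+1}$-minor-free graph, since each bag's colouring may reflect a global feature of $G$ rather than the purely local degree information to which Kostochka--Thomason is confined.
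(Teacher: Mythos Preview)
Your proposal correctly identifies that this statement is Hadwiger's Conjecture, a famous open problem, and the paper treats it exactly the same way: it is stated as a conjecture with no proof attempted, serving only as motivation for the weaker tree- and path-chromatic versions (Conjectures~\ref{tree} and~\ref{path}). There is nothing to compare, since neither you nor the paper claims a proof; your historical summary of the known cases ($t\le 5$) and of the Kostochka--Thomason and Norin--Postle--Song bounds is accurate and appropriate context.
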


We propose the following weakenings of Hadwiger's Conjecture.

\begin{conjecture} \label{tree}
If $G$ is a graph without a $K_{t+1}$-minor, then $\treechi(G) \leq t$. 
\end{conjecture}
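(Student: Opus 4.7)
The plan is to proceed by induction on $|V(G)|$, combining a clique-sum reduction with Lemma~\ref{chromaticseparation}. Since $G$ is $K_{t+1}$-minor-free, every clique in $G$ has size at most $t$ and hence chromatic number at most $t$, so clique separations come essentially for free. If $G = G_1 \cup G_2$ with $V(G_1) \cap V(G_2)$ a clique, then by induction $G_1$ and $G_2$ admit tree-decompositions $(T_1, \mathcal{B}_1)$ and $(T_2, \mathcal{B}_2)$ with all bags of chromatic number at most $t$. Since any clique lies inside a single bag of any tree-decomposition, one can pick bags $B_1 \in \mathcal{B}_1$ and $B_2 \in \mathcal{B}_2$ containing the shared clique and glue $T_1$ and $T_2$ by joining those nodes, yielding a tree-decomposition of $G$ of chromatic number at most $t$. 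This reduces the problem to $K_{t+1}$-minor-free graphs $G$ with no clique cutset.

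For such highly connected $G$, I would try to leverage Lemma~\ref{chromaticseparation} in its contrapositive form: if $\treechi(G) > t$, then every separation $(A,B)$ with $\chi(A \cap B) \le t$ must have both sides of chromatic number at least $\chi(G)-t$. Iterating this to peel off many disjoint high-chromatic subgraphs, one would hope to route them through suitable low-chromatic separators and build a $K_{t+1}$-minor directly, in the spirit of the Kostochka--Thomason argument bounding the extremal function for $K_{t+1}$-minors. An alternative route is to apply the Robertson--Seymour structure theorem to decompose $G$ along bounded adhesion into almost-embeddable pieces and bound the tree-chromatic number within each piece via surface-specific colouring arguments.

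The main obstacle is that after clique-sum decomposition one is left precisely with the quasi-$4$-connected $K_{t+1}$-minor-free building blocks, and any such piece of chromatic number exceeding $t$ would itself already be a counterexample to Hadwiger's conjecture. Thus the entire content of Conjecture~\ref{tree} lies in exploiting the extra slack afforded by $\treechi$ over $\chi$ — namely, that each bag, rather than the whole graph, need have small chromatic number — in order to find nontrivial low-chromatic separators inside these quasi-$4$-connected pieces. For $t=4$, Wagner's theorem makes the building blocks either planar or the Wagner graph $V_8$, and Section~\ref{sec:k5} uses this explicitly; for $t \ge 5$, the lack of an analogously clean structural description is the central difficulty, and a genuinely new idea would be required to make the iterated-separation approach succeed in full generality.
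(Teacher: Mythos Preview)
The statement you were asked to prove is labelled \emph{Conjecture} in the paper, and the paper does not give a proof of it. It is explicitly presented as an open weakening of Hadwiger's Conjecture; the only case the authors establish is $t=4$, via Wagner's theorem and the planar argument in Section~\ref{sec:k5}. So there is no ``paper's own proof'' to compare your proposal against.

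Your proposal, read carefully, is not a proof either, and you say as much: the clique-sum reduction in your first paragraph is correct and is exactly how the paper handles the $t=4$ case, but your second and third paragraphs are a survey of possible attack lines (iterated low-chromatic separations, Kostochka--Thomason-style minor-building, the graph-minor structure theorem) together with an honest admission that none of them is known to work for general $t$. That assessment is accurate. In particular, your observation that after clique-sum decomposition one is left with quasi-$4$-connected pieces, and that bounding $\chi$ on those pieces would already be Hadwiger, correctly isolates where the genuine content of the conjecture lies: one must exploit the extra freedom of tree-decompositions to find nontrivial low-chromatic separators inside such pieces, and no one currently knows how to do this for $t\ge 5$.

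In short: there is no gap to point to beyond the one you already name, because the statement is open and your write-up correctly reflects that. If the intent was to prove the $t=4$ case, your clique-sum paragraph plus the paper's planar argument (Theorem~\ref{planar}) is exactly the route taken.
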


\begin{conjecture} \label{path}
If $G$ is a graph without a $K_{t+1}$-minor, then $\pathchi(G) \leq t$. 
\end{conjecture}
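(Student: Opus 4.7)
The plan is to proceed by induction on $t$, using structural results on $K_{t+1}$-minor-free graphs together with a compositional property of path-chromatic number. The base cases $t\in\{1,2,3\}$ reduce to standard facts: for $t=1$ the graph has no edges, for $t=2$ it is a forest, and for $t=3$ it is $K_4$-minor-free and hence $3$-colourable, so the trivial bound $\pathchi(G)\leq\chi(G)\leq t$ already suffices. For $t=4$, one could either appeal to the Four Colour Theorem via Wagner's decomposition of $K_5$-minor-free graphs, or try to upgrade the short argument sketched for Conjecture~\ref{tree} in Section~\ref{sec:k5} so that it produces a path-decomposition rather than a tree-decomposition.

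For the inductive step at general $t$, the first step would be to reduce to the case that $G$ is $(t+1)$-connected, since smaller separators should yield a clique-sum decomposition along which path-chromatic number composes well. The second step would exploit that highly connected $K_{t+1}$-minor-free graphs admit a Robertson--Seymour structural description: most of the graph embeds in a surface of bounded genus, with a bounded number of bounded-depth vortices and a bounded number of apex vertices. The key compositional lemma I would try to establish is the following: if $G=G_1\cup G_2$ with $G_1\cap G_2$ a clique $C$ of size at most $t$, and each $G_i$ admits an optimal path-decomposition whose \emph{last} bag contains $C$, then $\pathchi(G)\leq\max(\pathchi(G_1),\pathchi(G_2))$, obtained by concatenating the two decompositions at $C$ (noting that the chromatic number of each bag is unchanged, since the bags themselves are unchanged).

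The main obstacle is precisely this endpoint constraint. For tree-chromatic number, the analogous composition lemma is clean because one can attach the decomposition of $G_2$ at any bag of the decomposition of $G_1$ containing $C$. Path-decompositions, by contrast, cannot branch, so it is far from obvious that one can always find an optimal path-decomposition whose terminal bag contains a prescribed clique. The existence of graphs $H_n$ with $\treechi(H_n)=2$ but $\pathchi(H_n)$ arbitrarily large (Theorem~\ref{septreepath}) shows concretely that path-decompositions are rigid in a way tree-decompositions are not, so any inductive scheme must contend with this rigidity. I would therefore expect progress on Conjecture~\ref{path} beyond what already follows from Conjecture~\ref{tree} to require genuinely new ideas --- most likely either a rearrangement argument that preserves both the chromatic bound and a prescribed endpoint clique, or a route that bypasses clique-sum composition entirely, perhaps by iteratively peeling off small-degree vertices via a Mader-type degree bound for $K_{t+1}$-minor-free graphs and arguing that the peeled vertex can be absorbed into the last bag of a path-decomposition of $G-v$ at a controlled chromatic cost.
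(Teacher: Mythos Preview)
This statement is presented in the paper as an open \emph{conjecture}, not a theorem; the paper offers no proof. Indeed, the paper explicitly notes that it is open whether Conjectures~\ref{tree}, \ref{path}, or \ref{local} hold even with $10^{100}t$ in place of $t$. What the paper does prove is only the $t=4$ case of the weaker Conjecture~\ref{tree} (tree-chromatic number), via a direct analysis of a specific tree-decomposition of planar triangulations combined with Wagner's theorem, and it remarks that Conjecture~\ref{path} may be harder than Conjecture~\ref{tree} precisely because of Theorem~\ref{septreepath} --- the same rigidity obstruction you yourself identify.

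Your proposal is not a proof but an honest sketch of where a proof might come from and why the obvious routes stall. You correctly isolate the central difficulty: clique-sum composition works cleanly for tree-decompositions (attach at any bag containing the clique) but not for path-decompositions (no branching), and there is no known mechanism to force a prescribed clique into a terminal bag while preserving the chromatic bound on every bag. Your fallback idea of peeling low-degree vertices via a Mader-type bound faces the same issue in disguise: absorbing $v$ into the last bag of an optimal path-decomposition of $G-v$ requires $N_G(v)$ to lie in that bag, which again cannot be arranged in general. Since the paper itself leaves the conjecture open, there is nothing to compare against; what you have written is a discussion of obstacles rather than a proof, and that accurately reflects the state of the problem.
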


By Theorem~\ref{septreepath}, $\treechi(G)$ and $\pathchi(G)$ can be arbitrarily far apart, so Conjecture~\ref{tree} may be easier to prove than Conjecture~\ref{path}.  By Theorem~\ref{septreechi}, $\chi$ and $\treechi$ can be arbitrarily far apart, so Conjecture~\ref{tree} may be easier to prove than Hadwiger's Conjecture. We give further evidence of this in the next section, by proving Conjecture~\ref{tree} for $t=5$, without using the Four Colour Theorem. 

Robertson, Seymour, and Thomas~\cite{RST93} proved that every $K_6$-minor-free graph is $5$-colourable.  Their proof uses the Four Colour Theorem and is $83$ pages long. Thus, even if we are allowed to use the Four Colour Theorem, it would be interesting to find a short proof that every $K_6$-minor-free graph has tree-chromatic number at most $5$.

Conjectures~\ref{tree} and~\ref{path} are also related to a `local' version of Hadwiger's Conjecture via the following lemma.

\begin{lemma} 
\label{leafbag}
Let $(T, \{B_t \mid t \in V(T)\})$ be a $\treechi$-optimal tree-decomposition of $G$, with $|V(T)|$ minimal.  Then there are vertices $v \in V(G)$ and $\ell \in V(T)$ such that $N_G[v] \subseteq B_\ell$.  
\end{lemma}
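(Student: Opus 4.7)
The plan is to exploit the minimality of $|V(T)|$ at a leaf of $T$. First I would dispose of the trivial case $|V(T)|=1$: then $B_\ell = V(G)$, so any vertex $v$ works.

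Assume $|V(T)| \geq 2$ and let $\ell$ be any leaf of $T$ with unique neighbour $\ell'$. The key preliminary claim is that $B_\ell \setminus B_{\ell'} \neq \emptyset$. To see this, suppose for contradiction that $B_\ell \subseteq B_{\ell'}$, and form $T' := T - \ell$ with bags inherited from $T$. Every edge of $G$ covered by $B_\ell$ is covered by $B_{\ell'}$, and for each $v \in B_\ell$ the remaining set $\{t \in V(T') : v \in B_t\}$ is still a non-empty subtree because it contains $\ell'$. So $(T', \{B_t\}_{t \in V(T')})$ is a tree-decomposition of $G$ whose chromatic number is at most that of $(T, \{B_t\})$, contradicting the minimality of $|V(T)|$ among $\treechi$-optimal decompositions.

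Next I would pick any $v \in B_\ell \setminus B_{\ell'}$ and use the subtree property: the set $\{t \in V(T) : v \in B_t\}$ is a non-empty subtree of $T$ containing $\ell$ but not $\ell'$. Since $\ell$ is a leaf and $\ell'$ is its only neighbour, this subtree is forced to be exactly $\{\ell\}$. Finally, for any neighbour $u \in N_G(v)$, the tree-decomposition axioms require some bag $B_t$ with $\{u,v\} \subseteq B_t$; but the only bag containing $v$ is $B_\ell$, so $u \in B_\ell$. Hence $N_G[v] \subseteq B_\ell$, as required.

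There is no serious obstacle here: the argument is really just the observation that a minimum-size $\treechi$-optimal tree-decomposition cannot have a ``redundant'' leaf bag, combined with the standard Helly-type property of tree-decompositions. The only point requiring a modicum of care is verifying that deleting $\ell$ does not increase the chromatic number of the decomposition, which is immediate since the new decomposition uses a subset of the original bags.
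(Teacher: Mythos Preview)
Your proof is correct and follows essentially the same approach as the paper: pick a leaf $\ell$, use minimality of $|V(T)|$ to find $v \in B_\ell$ lying in no other bag, and conclude $N_G[v] \subseteq B_\ell$. Your version is simply more detailed (you explicitly verify that $T-\ell$ remains a tree-decomposition and handle the $|V(T)|=1$ case), but the argument is the same.
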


\begin{proof}
Let $\ell$ be a leaf of $T$ and $u$ be the unique neighbour of $\ell$ in $T$.  If $B_\ell \subseteq B_u$, then $T-\ell$ contradicts the minimality of $T$.  Therefore, there is a vertex $v \in B_\ell$ such that $v \notin B_t$ for all $t \neq \ell$.  It follows that $N_G[v] \subseteq B_\ell$, as required.  
\end{proof}

Lemma~\ref{leafbag} immediately implies that the following `local version' of Hadwiger's Conjecture follows from Conjecture~\ref{tree}.  

\begin{conjecture} \label{local}
 If $G$ is a graph without a $K_{t+1}$-minor, then there exists $v \in V(G)$ such that $\chi(N_G[v]) \leq t$.
\end{conjecture}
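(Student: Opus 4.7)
The approach is to obtain Conjecture~\ref{local} as an immediate corollary of Conjecture~\ref{tree} via Lemma~\ref{leafbag}. Suppose $G$ has no $K_{t+1}$-minor; granting Conjecture~\ref{tree}, choose a tree-decomposition $(T,\{B_s : s \in V(T)\})$ of $G$ that attains $\treechi(G)$ and, among all such decompositions, minimises $|V(T)|$. By Lemma~\ref{leafbag}, there is a leaf $\ell$ of $T$ and a vertex $v \in B_\ell$ with $N_G[v] \subseteq B_\ell$. Since $G[N_G[v]]$ is a subgraph of $G[B_\ell]$,
\[
\chi(N_G[v]) \leq \chi(B_\ell) \leq \treechi(G) \leq t,
\]
where the last inequality uses Conjecture~\ref{tree}. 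This is exactly the desired conclusion.

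The main obstacle is therefore Conjecture~\ref{tree} itself; the reduction carries essentially no content beyond Lemma~\ref{leafbag}. For small values of $t$ one can verify Conjecture~\ref{local} unconditionally: the cases $t \leq 4$ follow because Hadwiger's Conjecture is known in that range (so $\chi(G) \leq t$, and hence $\chi(N_G[v]) \leq t$ for any $v$), and $t=5$ follows either from Robertson--Seymour--Thomas or, avoiding the Four Colour Theorem, from the tree-chromatic bound of Section~\ref{sec:k5} combined with the reduction above.

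A direct attack on Conjecture~\ref{local} that bypasses Conjecture~\ref{tree} would be interesting but looks hard. A vertex of degree at most $t-1$ would immediately suffice, yet the best known upper bound on the minimum degree of $K_{t+1}$-minor-free graphs is of order $t\sqrt{\log t}$ by Thomason, so one cannot simply pick a low-degree vertex. Producing a vertex whose closed neighbourhood is structured enough to be $t$-coloured appears to require much the same minor-extraction machinery that underlies approaches to Hadwiger's Conjecture itself, which is precisely what the tree-chromatic weakening is designed to sidestep.
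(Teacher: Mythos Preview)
Your reduction is exactly what the paper does: it states (without further detail) that Lemma~\ref{leafbag} immediately shows Conjecture~\ref{local} follows from Conjecture~\ref{tree}, and you have simply spelled out that one-line implication. The additional commentary on small cases and on the difficulty of a direct attack is accurate and consistent with the paper's remarks.
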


It is even open whether Conjectures~\ref{tree},~\ref{path}, or~\ref{local} hold with an upper bound of $10^{100}t$ instead of $t$.  Finally, the following apparent weakening of Hadwiger's Conjecture (and strengthening of Conjecture~\ref{local}) is actually equivalent to Hadwiger's Conjecture.  

\begin{conjecture} \label{global}
 If $G$ is a graph without a $K_{t+1}$-minor, then $\chi(N_G[v]) \leq t$ for all $v \in V(G)$.  
\end{conjecture}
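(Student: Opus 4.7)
The plan is to obtain the bound $\chi(N_G[v]) \leq t$ as an immediate consequence of Hadwiger's Conjecture applied to a suitable subgraph. Given a graph $G$ with no $K_{t+1}$-minor and an arbitrary vertex $v \in V(G)$, I would set $H := G[N_G[v]]$. Since $H$ is an induced subgraph of $G$, and the property of having no $K_{t+1}$-minor is hereditary under taking subgraphs, $H$ itself has no $K_{t+1}$-minor. Applying Hadwiger's Conjecture to $H$ then yields $\chi(N_G[v]) = \chi(H) \leq t$, which is exactly the bound claimed.

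The reduction itself is a one-line observation, so the main obstacle is not the reduction but rather the fact that Hadwiger's Conjecture is open, making the argument only conditional. As the paper already notes, Conjecture~\ref{global} is equivalent to Hadwiger's Conjecture, which means that any unconditional proof of $\chi(N_G[v]) \leq t$ must contain a proof of Hadwiger's Conjecture itself; no genuinely shorter route is possible. In particular, attempts to exploit the locality of $N_G[v]$ — for instance by induction on $|V(G)|$ using minor-closed structure theorems, or by invoking degeneracy of $K_{t+1}$-minor-free graphs — can only recover bounds weaker than $t$, because any sharper strategy would settle Hadwiger's Conjecture.

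Consequently, the feasible unconditional version of the plan is to substitute, in the final step of the reduction, the best available upper bound on the chromatic number of $K_{t+1}$-minor-free graphs. For example, combining the reduction with the recent $O(t \log \log t)$ bound of Delcourt and Postle gives the unconditional estimate $\chi(N_G[v]) = O(t \log \log t)$ for every $v$, and any future improvement on Hadwiger's bound transfers verbatim. This justifies the weaker form of Conjecture~\ref{global} mentioned in the paper (with $10^{100} t$ in place of $t$) and isolates Hadwiger's Conjecture as the sole obstruction to the sharp statement.
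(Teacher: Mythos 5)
Your derivation of the bound from Hadwiger's Conjecture is correct, and in fact can be shortened: $\chi(N_G[v]) \leq \chi(G) \leq t$ holds directly, without passing to the induced subgraph $G[N_G[v]]$. However, the proof the paper attaches to this statement is not a conditional derivation of the conjecture---it is a proof that Conjecture~\ref{global} is \emph{equivalent} to Hadwiger's Conjecture. You invoke that equivalence as a known fact (``as the paper already notes'') and use it to argue that no unconditional proof is possible, but the non-trivial half of the equivalence is precisely the content you leave out: why Conjecture~\ref{global} implies Hadwiger's Conjecture.

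The missing step is a one-line apex construction. Given a graph $G$ with no $K_{t+1}$-minor, form $G^+$ by adding a new vertex $v$ adjacent to every vertex of $G$. Then $G^+$ has no $K_{t+2}$-minor (any $K_{t+2}$-minor in $G^+$ has at most one branch set meeting $v$; discarding it leaves a $K_{t+1}$-minor in $G$). Applying Conjecture~\ref{global} with parameter $t+1$ to $G^+$ at the vertex $v$ gives $\chi(N_{G^+}[v]) \leq t+1$. Since $N_{G^+}[v] = V(G^+)$ and $v$ is universal, $\chi(N_{G^+}[v]) = \chi(G) + 1$, so $\chi(G) \leq t$, i.e.\ Hadwiger's Conjecture holds for $G$. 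Without this direction, your proposal only reduces Conjecture~\ref{global} to Hadwiger's Conjecture rather than establishing the equivalence that is the actual point of the paper's argument. Your closing remarks about substituting the Delcourt--Postle $O(t \log\log t)$ bound are a sensible aside, but they address a different (unconditional, weaker) statement and do not fill this gap.
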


\begin{proof}[Proof of equivalence to Hadwiger's Conjecture]
Clearly, Hadwiger's Conjecture implies Conjecture~\ref{global}.  For the converse, let $G$ be a graph without a $K_{t+1}$-minor.  Let $G^+$ be the graph obtained from $G$ by adding a new vertex $v$
adjacent to all vertices of $G$.  Since $G^+$ has no $K_{t+2}$-minor, Conjecture~\ref{global} yields $\chi(N_{G^+}[v]) \leq t+1$.  Since $\chi(N_{G^+}[v])=\chi(G)+1$, we have $\chi(G) \leq t$, as required.  
\end{proof}

\section{$K_5$-minor-free graphs} \label{sec:k5}
As evidence that Conjecture~\ref{tree} may be more tractable than Hadwiger's Conjecture, we now prove it for $K_5$-minor-free graphs without using the Four Colour Theorem. We begin with the planar case.   

\begin{theorem} \label{planar}
For every planar graph $G$, $\treechi(G) \leq 4$.
\end{theorem}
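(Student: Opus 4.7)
The plan is to construct an explicit tree-decomposition of $G$ whose underlying tree is a star, starting from a $5$-colouring of $G$. The Five Colour Theorem has an elementary proof that does not rely on the Four Colour Theorem, so I will fix a proper $5$-colouring $\phi : V(G) \to \{1,2,3,4,5\}$ of $G$ and set $S := \phi^{-1}(5)$, which is an independent set. I then let $T$ be the star with centre $t_0$ and one leaf $\ell_v$ for each $v \in S$, and define $B_{t_0} := V(G) \setminus S$ and $B_{\ell_v} := N_G[v]$ for every $v \in S$.

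Next I would verify that $(T, \mathcal{B})$ is a tree-decomposition. Every edge of $G$ either lies entirely in $V(G) \setminus S$ (so is covered by $B_{t_0}$) or has exactly one endpoint $v \in S$ and the other in $N_G(v) \subseteq B_{\ell_v}$. Each $v \in S$ appears only in its own leaf bag because $S$ is independent, while each $w \notin S$ appears in $B_{t_0}$ together with the leaf bags $B_{\ell_v}$ for $v \in N_G(w) \cap S$, and these nodes form a connected substar of $T$. This verification is routine.

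For the chromatic bounds, the centre bag is immediate: $\phi$ restricted to $V(G) \setminus S$ uses only four colours, so $\chi(B_{t_0}) \leq 4$. The main non-trivial step, and the only place where planarity does real work beyond the Five Colour Theorem, is bounding $\chi(N_G[v]) \leq 4$ for each $v \in S$. The plan is to show that $G[N_G(v)]$ is outerplanar: fix a plane embedding of $G$ and a small open disk around $v$ that meets $G$ only in $v$ and the initial segments of its incident edges; then $G[N_G(v)]$ inherits a plane embedding in the complement of this disk with every vertex on the disk's boundary, i.e., an outerplanar embedding. Since outerplanar graphs are $3$-colourable, $\chi(N_G[v]) \leq 4$, and hence $\treechi(G) \leq 4$.

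The hard part is really just the outerplanarity observation; once that is in hand, the star construction delivers the bound, and the argument uses only the elementary Five Colour Theorem and the $3$-colourability of outerplanar graphs, avoiding the Four Colour Theorem as required.
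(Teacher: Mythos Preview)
Your argument is correct and takes a genuinely different (and much shorter) route from the paper's proof. The paper builds the Eppstein / Dujmovi\'c--Morin--Wood tree-decomposition from a BFS layering: the tree is the cotree $T^*$ of a BFS spanning tree, each bag is the union of three root-paths, and an inductive case analysis (exploiting the absence of a $K_5$-minor in various configurations between two consecutive BFS layers) shows that every bag is $4$-colourable. Your star decomposition sidesteps all of that: one colour class $S$ from a $5$-colouring singles out the leaf bags $N_G[v]$, and the only non-trivial point is that $G[N_G(v)]$ is outerplanar, hence $3$-colourable. This buys you a dramatically shorter proof at the price of invoking the Five Colour Theorem as a black box; the paper's approach is longer but produces a tree-decomposition with more structure (bounded bag size on each BFS level), which is why that decomposition is useful elsewhere.

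One small tightening: your disk picture for outerplanarity is a bit informal as written, since the neighbours of $v$ do not literally sit on the boundary of a small disk around $v$. The clean justification is the standard characterisation that a graph $H$ is outerplanar if and only if the join $H \vee K_1$ is planar; here $G[N_G(v)] \vee \{v\} = G[N_G[v]]$ is a subgraph of the planar graph $G$, so $G[N_G(v)]$ is outerplanar. With that one-line replacement the argument is fully rigorous.
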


\begin{proof}
We use the same tree-decomposition previously used by Eppstein~\cite{Eppstein99} and Dujmovi\'c, Morin, and Wood~\cite{DMW17}.

Say $G$ has  $n$ vertices.  We may assume that $n\geq 3$ and that $G$ is a plane triangulation. Let $F(G)$ be the set of faces of $G$. By Euler's formula, $|F(G)|=2n-4$ and $|E(G)|=3n-6$. Let $r$ be a vertex of $G$. Let $(V_0,V_1,\dots,V_t)$ be the bfs layering of $G$ starting from $r$. Let $T$ be a bfs tree of $G$ rooted at $r$. Let $T^*$ be the subgraph of the dual $G^*$ with vertex set $F(G)$, where two vertices are adjacent if the corresponding faces share an edge not in $T$. Thus $$|E(T^*)|=|E(G)|-|E(T)|=(3n-6)-(n-1)=2n-5=
|F(G)|-1=
|V(T^*)|-1.$$ 
By the Jordan Curve Theorem, $T^*$ is connected. Thus $T^*$ is a tree. 

For each vertex $u$ of $T^*$, if $u$ corresponds to the face $xyz$ of $G$, let $C_u := P_x\cup P_y\cup P_z$, where $P_v$ is the vertex set of the $vr$-path in $T$, for each $v\in V(G)$. See \cite{Eppstein99,DMW17} for a proof that $(T^*, \{C_u:u\in V(T^*)\})$ is a tree-decomposition of $G$. 

We now prove that $G[C_u]$ is 4-colourable. Let $\ell$ be the largest index such that $\{x,y,z\} \cap V_\ell \neq \emptyset$. 
For each $k \in \{0, \dots, \ell\}$, let $G_k=G[C_u \cap (\bigcup_{j=0}^k V_j)]$.  Note that $G_\ell=G[C_u]$.  We prove by induction on $k$ that $G_k$ is 4-colourable.  This clearly holds for $k \in \{0,1\}$, since $|V(G_1)| \leq 4$.  

For the inductive step, let $k \geq 2$.  For each $i \in \{0, \dots, \ell\}$, let $W_i=C_u \cap V_i$.  Since $W_i$ contains at most one vertex from each of $P_x, P_y$, and $P_z$, $|W_i| \leq 3$.  

First suppose $|W_i| \leq 2$ for all $i \leq k$. Since all edges of $G$ are between consecutive layers or within a layer, we can $4$-colour $G_k$ by using the colours $\{1,2\}$ on the even layers and $\{3,4\}$ on the odd layers.

Next suppose $|W_k| \leq 2$. We are done by the previous case unless $k=\ell, |W_\ell| \in \{1,2\}$, and $|W_{\ell-1}|=3$. By induction, let $\phi': V(G_{\ell-2}) \to [4]$ and $\phi: V(G_{\ell-1}) \to [4]$ be $4$-colourings of $G_{\ell-2}$ and $G_{\ell-1}$, respectively. If $|W_\ell| =1$, then clearly we can extend $\phi$ to a $4$-colouring of $G_\ell$.  So, we may assume  $|W_\ell|=2$.

Note that $\phi$ extends to a $4$-colouring of $G_\ell$ unless every vertex of $W_{\ell-1}$ is adjacent to every vertex of $W_\ell$ and the two vertices of $W_\ell$ are adjacent.  If $G[W_{\ell-1}]$ is a triangle, then $G[W_{\ell-1} \cup W_\ell]=K_5$, which contradicts planarity. If $G[W_{\ell-1}]$ is a path, say $abc$, then we obtain a $K_5$-minor in $G$ by contracting all but one edge of the $a$--$c$ path in $T$.  If $W_{\ell-1}$ is a stable set, then $\phi'$ can be extended to a $4$-colouring of $G_{\ell-1}$ such that all vertices in $W_{\ell-1}$ are the same colour.  This colouring can clearly be extended to a $4$-colouring of $G_\ell$. The remaining case is if $G[W_{\ell-1}]$ is an edge $ab$ together with an isolated vertex $c$. It suffices to show that there is a colouring of $G_{\ell-1}$ that uses at most two colours on $W_{\ell-1}$, since such a colouring can be extended to a $4$-colouring of $G_\ell$. Note that $\phi'$ can be extended to such a colouring unless $\phi'$ uses three colours on $W_{\ell-2}$ and $a$ and $b$ are adjacent to all vertices of $W_{\ell-2}$.  Since $\phi$ is a $4$-colouring, this implies that $\phi$ uses at most two colours on $W_{\ell-2}$.  Thus we may recolour $\phi$ so that only two colours are used on $W_{\ell-1}$, as required. 

Henceforth, we may assume $|W_k|=3$.  By induction, let $\phi: V(G_{k-1}) \to [4]$ be a $4$-colouring of $G_{k-1}$.  Let $\phi_{k-1}=\phi(W_{k-1})$.  

If $|\phi_{k-1}|=1$, then we can extend $\phi$ to a 4-colouring of $G_k$ by using $[4] \setminus \phi_{k-1}$ to 3-colour $W_k$.  

Suppose $|\phi_{k-1}|=2$.  By induction, $G_{k-2}$ has a 4-colouring $\phi'$. If $W_{k-1}$ is a stable set, then we can extend $\phi'$ to a $4$-colouring of $G_{k-1}$ such that all vertices of $W_{k-1}$ are the same colour.  Thus, $|\phi'_{k-1}|=1$, and we are done by the previous case.  Let $a,b \in W_{k-1}$ such that $ab \in E(G_{k-1})$. Let $c$ be the other vertex of $W_{k-1}$ (if it exists). By relabeling, we may assume that $\phi(a)=1, \phi(b)=2$, and $\phi(c)=2$.  Let $N(a)$ be the set of neighbours of $a$ in $W_k$ and $N(b,c)$ be the set of neighbours of $\{b,c\}$ in $W_k$.  Observe that $\phi$ extends to a $4$-colouring of $G_k$ unless $N(a)=N(b,c)=W_k$.  However, if, $N(a)=N(b,c)=W_k$, then we obtain a $K_5$-minor in $G$ by using $T$ to contract $W_k$ onto $\{x,y,z\}$ and $c$ onto $b$ (if $c$ exists).  This contradicts planarity.  

The remaining case is $|\phi_{k-1}|=3$. In this case, $\phi$ extends to a $4$-colouring of $G_k$, unless there exist distinct vertices $a,b \in W_{k-1}$ such that $a$ and $b$ are both adjacent to all vertices of $W_k$.  Again we obtain a $K_5$-minor in $G$ by using $T$ to contract $W_k$ onto $\{x,y,z\}$ and contracting all but one edge of the $a$--$b$ path in $T$.  
\end{proof}

We finish the proof by using Wagner's characterization of $K_5$-minor-free graphs~\cite{Wagner37}, which we now describe.  Let $G_1$ and $G_2$ be two graphs with $V(G_1) \cap V(G_2)=K$, where $K$ is a clique of size $k$ in both $G_1$ and $G_2$.  The \emph{$k$-sum} of $G_1$ and $G_2$ (along $K$) is the graph obtained by gluing $G_1$ and $G_2$ together along $K$ (and keeping all edges of $K$).  The \emph{Wagner graph} $V_8$ is the graph obtained from an $8$-cycle by adding an edge between each pair of antipodal vertices.  

\begin{theorem}[Wagner's Theorem~\cite{Wagner37}] \label{wagner}
Every edge-maximal $K_5$-minor-free graph can be obtained from $1$-, $2$-, and $3$-sums of planar graphs and $V_8$.
\end{theorem}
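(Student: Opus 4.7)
The plan is to prove Wagner's theorem by induction on $|V(G)|$, where $G$ is an edge-maximal $K_5$-minor-free graph. The base case is immediate since small $K_5$-minor-free graphs are planar. The inductive step splits according to whether $G$ admits a useful low-order separation.

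Suppose first that $G$ has a separation $(G_1, G_2)$ with $S := V(G_1) \cap V(G_2)$ of size at most $3$ such that $S$ is a clique in $G$. A preliminary edge-maximality argument is needed to reduce to this case: if $uv$ is a non-edge inside a small cut $S$, then any hypothetical $K_5$-minor in $G + uv$ must use $uv$, and one tries to reroute its branch sets through the opposite side of the separation to produce a $K_5$-minor already in $G$; when this works, $G + uv$ is still $K_5$-minor-free, contradicting edge-maximality of $G$, so $uv$ was already an edge. Once $S$ is a clique, $G$ is the $|S|$-sum of $G_1$ and $G_2$, and I would verify that each $G_i$ is itself edge-maximal $K_5$-minor-free: a $K_5$-minor in some $G_i$ transfers to $G$ (since $S$ is a clique and the other side is connected through $S$), while edge-maximality of $G_i$ follows because any missing edge in $G_i$ is missing in $G$, contradicting maximality of $G$. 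Induction then expresses each $G_i$ as the required sum, and hence so does $G$.

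The hard case is when no such useful separation exists, so $G$ is essentially $4$-connected. I would show that $G$ is either planar or isomorphic to $V_8$. If $G$ is planar, we are done. Otherwise, Kuratowski's theorem provides a subdivision of $K_5$ or $K_{3,3}$ in $G$; the first directly contradicts $K_5$-minor-freeness, so $G$ contains a $K_{3,3}$-subdivision $H$. I would then analyse how $V(G) \setminus V(H)$ attaches to $H$: high connectivity forces each extra vertex or long branch path to send enough internally disjoint paths into distinct branch paths of $H$ that contracting appropriate subgraphs produces a $K_5$-minor, unless the attachments line up in the highly rigid fashion exhibited by $V_8$ (an $8$-cycle with antipodal diagonals, equivalently the bipartite double cover of $K_4$). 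The main obstacle is this structural step---exhaustively ruling out every near-$4$-connected non-planar $K_5$-minor-free graph other than $V_8$. It is the technical heart of Wagner's original argument, conceptually clear but notationally intensive; the separation step, by contrast, is routine once one correctly handles the passage from ``$S$ separates $G$'' to ``$S$ is a clique''.
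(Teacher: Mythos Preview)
The paper does not prove Wagner's Theorem. It is stated with a citation to Wagner's 1937 paper and then used as a black box in the proof of the next theorem (that every $K_5$-minor-free graph has $\treechi\le 4$). There is therefore no ``paper's own proof'' of this statement to compare your outline against; you are attempting to supply a proof where the authors deliberately imported a classical result.

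For what it is worth, your sketch follows the standard route to Wagner's theorem and is broadly sound. One point that would need tightening if you actually wrote it out: your argument for edge-maximality of each $G_i$ says ``any missing edge in $G_i$ is missing in $G$, contradicting maximality of $G$''. That only yields a $K_5$-minor in $G+e$, not in $G_i+e$; you still need the (standard) observation that a $K_5$-minor in a clique-sum over a clique of size at most $3$ can be pushed entirely to one side by contracting the other side onto the clique --- essentially the same rerouting idea you already invoke when arguing that $S$ must be a clique.
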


\begin{theorem}
For every $K_5$-minor-free graph $G$, $\treechi(G) \leq 4$.
\end{theorem}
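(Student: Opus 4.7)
The plan is to combine Theorem~\ref{planar} with Wagner's structure theorem (Theorem~\ref{wagner}) via an inductive gluing argument on tree-decompositions. Since $\treechi$ is monotone under taking subgraphs, it suffices to bound $\treechi(G)$ when $G$ is edge-maximal $K_5$-minor-free. Theorem~\ref{wagner} then expresses $G$ as an iterated $k$-sum ($k\in\{1,2,3\}$) of planar graphs and copies of $V_8$, and I would induct on the number of summands used in such a construction.

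For the base cases, Theorem~\ref{planar} already gives $\treechi\le 4$ for every planar summand, and $V_8$ is easily checked to be $3$-colourable (cycle the colours $1,2,3$ around the defining $8$-cycle $v_0,\dots,v_7$; the antipodal edges $v_iv_{i+4}$ are bichromatic because $4\not\equiv 0\pmod 3$), so $\treechi(V_8)\le\chi(V_8)\le 3$. For the inductive step, the key ingredient is that $\treechi$ behaves well under small clique-sums: if $G$ is the $k$-sum ($k\le 3$) of $G_1$ and $G_2$ along a clique $K$, and $\treechi(G_i)\le 4$ for $i=1,2$, then $\treechi(G)\le 4$. Given tree-decompositions $(T_i,\mathcal{B}_i)$ of $G_i$ realising the bound, the Helly property of subtrees of a tree (equivalently, the standard fact that every clique of a graph is contained in some bag of any tree-decomposition) yields $t_i\in V(T_i)$ with $K\subseteq B^i_{t_i}$. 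Construct the tree $T$ from the disjoint union $T_1\cup T_2$ by adding the edge $t_1t_2$ and let $\mathcal{B}$ be the union of the two bag families; no bag has been modified, so every bag still has chromatic number at most~$4$.

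The only step requiring verification, and the sole mild obstacle, is that $(T,\mathcal{B})$ is a valid tree-decomposition of $G$. The edge condition is immediate since every edge of $G$ is an edge of some $G_i$ and is therefore covered by a bag of the corresponding $\mathcal{B}_i$. The subtree condition for $v\notin V(K)$ is inherited from the relevant $T_i$; for $v\in V(K)$, the sets $\{t\in V(T_i):v\in B^i_t\}$ are non-empty subtrees of $T_i$ each containing $t_i$, so together with the new edge $t_1t_2$ they form a connected subtree of $T$. With this gluing lemma and the two base cases in hand, a straightforward induction on the number of summands in the Wagner decomposition of $G$ completes the proof, and the Four Colour Theorem is never invoked.
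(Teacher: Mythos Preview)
Your proposal is correct and follows the same route as the paper: reduce to the edge-maximal case, invoke Wagner's Theorem, handle the base pieces via Theorem~\ref{planar} and a direct bound on $\chi(V_8)$, and glue the two tree-decompositions along bags containing the shared clique. The only cosmetic differences are that you induct on the number of Wagner summands rather than on $|V(G)|$, you spell out the tree-decomposition axioms for the glued decomposition, and you observe (correctly, and in fact more sharply than the paper's stated $\chi(V_8)=4$) that $\chi(V_8)=3$.
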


\begin{proof}
Let $G$ be a $K_5$-minor-free graph.  We proceed by induction on $|V(G)|$.  We may assume that $G$ is edge-maximal.  First note that if $G=V_8$, then $\treechi(G) \leq \chi(G)=4$.  Next, if $G$ is planar, then $\treechi(G) \leq 4$ by Theorem~\ref{planar} (whose proof avoids the Four Colour Theorem).  By Theorem~\ref{wagner}, we may assume that $G$ is a $k$-sum of two graphs $G_1$ and $G_2$, for some $k \in [3]$.  Let $K$ be the clique in $V(G_1) \cap V(G_2)$ along which the $k$-sum is performed.  Since $G_1$ and $G_2$ are both $K_5$-minor-free graphs with $|V(G_1)|, |V(G_2)| < |V(G)|$, we have $\treechi(G_1)\leq 4$ and $\treechi(G_2) \leq 4$ by induction.  For $i \in [2]$, let $(T^i, \{B_t^i \mid t \in V(T^i)\})$ be a tree-decomposition of $G_i$ with chromatic number at most $4$.  Since $K$ is a clique in $G_i$, $K \subseteq B_x^1 \cap B_y^2$ for some $x \in V(T^1)$ and $y \in V(T^2)$.  Let $T$ be the tree obtained from the disjoint union of $T^1$ and $T^2$ by adding an edge between $x$ and $y$.  Then $(T, \{B_t^1 \mid t \in V(T^1)\} \cup \{B_t^2 \mid t \in V(T^2)\})$ is a tree-decomposition of $G$
with chromatic number at most $4$.
\end{proof}

\section{Computing $\treechi$ and $\pathchi$} \label{sec:computing}
We finish by showing some hardness results for computing $\treechi$ and $\pathchi$.  We need some preliminary results.
For a graph $G$, let $K_t^G$ be the graph consisting of $t$ disjoint copies of $G$ and all edges between distinct copies of $G$.  

\begin{lemma} \label{blowup}
For all $t \in \mathbb{N}$ and all graphs $G$ without isolated vertices, 
\[
(t-1)\chi(G)+2 \leq \treechi(K_t^G) \leq \pathchi(K_t^G) \leq t\, \chi(G).
\]
\end{lemma}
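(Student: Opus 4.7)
The plan is to handle the three inequalities in the stated order. The upper bound $\pathchi(K_t^G) \leq t\chi(G)$ comes from the trivial path-decomposition whose only bag is $V(K_t^G)$: this bag has chromatic number $\chi(K_t^G) = t\chi(G)$, where the equality holds because distinct copies of $G$ are pairwise completely joined and so must receive disjoint colour classes. The middle inequality $\treechi(K_t^G) \leq \pathchi(K_t^G)$ is immediate, since every path-decomposition is a tree-decomposition.

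The only interesting inequality is the lower bound $(t-1)\chi(G)+2 \leq \treechi(K_t^G)$, and the key observation is that Lemma~\ref{leafbag} is exactly the right tool. I would fix a $\treechi$-optimal tree-decomposition $(T, \{B_s \mid s \in V(T)\})$ of $K_t^G$ with $|V(T)|$ minimum and apply Lemma~\ref{leafbag} to obtain a vertex $v \in V(K_t^G)$ and a leaf $\ell \in V(T)$ such that $N_{K_t^G}[v] \subseteq B_\ell$. Let $G_{i_0}$ be the copy of $G$ containing $v$. Since $G$ has no isolated vertices, $v$ has a neighbour $w$ inside $G_{i_0}$.

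It remains to show that $\chi(B_\ell) \geq (t-1)\chi(G)+2$, which I would do by peeling off three layers of obligatory colours in any proper colouring of $K_t^G[B_\ell]$. First, $\bigcup_{j \neq i_0} V(G_j) \subseteq N_{K_t^G}[v] \subseteq B_\ell$ induces a copy of $K_{t-1}^G$ and so requires at least $(t-1)\chi(G)$ colours. Second, $v$ is adjacent to every vertex of this subgraph and hence is forced to use one further colour. Third, $w$ is adjacent to $v$ and to every vertex of the other copies, and so is forced to use yet another new colour. Hence $\chi(B_\ell) \geq (t-1)\chi(G)+2$, and since $\treechi(K_t^G) \geq \chi(B_\ell)$, the lower bound follows. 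There is no real obstacle once Lemma~\ref{leafbag} is in hand: the counting of forced colour classes is routine, and the hypothesis that $G$ has no isolated vertices is used solely to secure the existence of the neighbour $w$.
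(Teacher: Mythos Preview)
Your proof is correct and follows essentially the same approach as the paper: the upper bounds via the trivial one-bag decomposition and $\chi(K_t^G)=t\chi(G)$, and the lower bound via Lemma~\ref{leafbag} together with the observation that $N_{K_t^G}[v]$ contains a $K_{t-1}^G$ plus $v$ and a same-copy neighbour of $v$. The paper's writeup is slightly more compressed, but the argument is identical.
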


\begin{proof}
Let $(T, \{B_t \mid t \in V(T)\})$ be a $\treechi$-optimal tree-decomposition of $K:=K_t^G$, with $|V(T)|$ minimal. By Lemma~\ref{leafbag}, there exists $\ell \in V(T)$ and $v \in V(K)$ such that $N_K[v] \subseteq B_\ell$.  Since $G$ has no isolated vertices, $v$ has a neighbour in the same copy of $G$ in which it belongs.  Therefore, 
\[
\treechi(K) \geq \chi(B_\ell) \geq \chi(N_K[v]) \geq 2 +(t-1)\chi(G).
\]
For the other inequalities, $\treechi(K) \leq \pathchi(K) \leq \chi(K)=t\,\chi(G)$.
\end{proof}

We also require the following hardness result of Lund and Yannakakis~\cite{LY94}.

\begin{theorem}[\cite{LY94}] \label{chihard}
There exists $\epsilon >0$, such that it is NP-hard to correctly determine $\chi(G)$ within a multiplicative factor of $n^{\epsilon}$  for every $n$-vertex graph $G$.
\end{theorem}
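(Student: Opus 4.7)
The plan is to derive Theorem~\ref{chihard} from the PCP theorem following the route of Lund-Yannakakis, as later streamlined by Feige and Kilian. The argument naturally splits into three stages: obtain constant-factor hardness for an intermediate graph problem, amplify the gap to $n^\epsilon$, and finally transfer the hardness from the independence number to the chromatic number.

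First, I would invoke the PCP theorem in its gap form: for some absolute constant $\delta > 0$ it is NP-hard to distinguish satisfiable 3-CNF formulas from those in which at most a $(1-\delta)$ fraction of the clauses can be simultaneously satisfied. The FGLSS reduction (Feige, Goldwasser, Lov\'asz, Safra, Szegedy) converts such a formula into a graph $H$ whose independence number $\alpha(H)$ is, up to normalization, the maximum fraction of simultaneously satisfiable clauses, and hence is NP-hard to approximate within some constant factor $c > 1$. To push $c$ up to $n^\epsilon$, I would then take iterated tensor powers: the product $H^{\otimes k}$ satisfies $\alpha(H^{\otimes k}) = \alpha(H)^k$ on $N := |V(H)|^k$ vertices, turning a multiplicative gap of $c$ into a gap of $c^k = N^{\epsilon}$ for some $\epsilon > 0$ depending only on $c$ and $|V(H)|$.

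To pass from $\alpha$ to $\chi$, I would exploit the inequality $\chi(G) \cdot \alpha(G) \geq |V(G)|$ together with a blow-up construction that, in the YES case, certifies a proper coloring with few colors. Following Feige-Kilian, one replaces each vertex of the amplified graph by a random (or pseudorandom) copy of a suitably chosen graph on $[s]$ vertices, calibrated so that the YES case admits a proper $s$-coloring while in the NO case the $|V|/\alpha$ bound forces $\chi \geq s \cdot n^\epsilon$, giving the desired inapproximability for $\chi$ after renaming parameters.

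The main obstacle, and the reason this result is significantly harder than the corresponding inapproximability for $\alpha$, is controlling $\chi$ from \emph{above} in the YES case: the bound $n/\alpha$ only gives a lower bound on $\chi$, so the reduction must be engineered so that the many disjoint large independent sets produced in the YES case actually assemble into few color classes \emph{simultaneously}. This is what forces either a careful derandomization of the blow-up step, or an explicit encoding of the verifier's color structure from an underlying two-prover interactive protocol, and it is the technically most delicate part of the Lund-Yannakakis argument.
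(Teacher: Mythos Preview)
The paper does not prove this theorem at all: Theorem~\ref{chihard} is quoted from \cite{LY94} as a black-box input to the reductions in Section~\ref{sec:computing}, and no argument is given in the paper. So there is no ``paper's own proof'' to compare your proposal against.

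That said, your sketch is a faithful high-level outline of the standard route to this result. A couple of minor points if you intend to flesh it out: the graph product you want for the amplification step is the one under which the independence number is exactly multiplicative (e.g.\ the lexicographic product, or equivalently the strong product on complements for clique number), and calling it a ``tensor power'' is slightly loose since the categorical tensor product does not have this property in general. Also, the original Lund--Yannakakis paper predates Feige--Kilian and uses a somewhat different mechanism for the $\alpha \to \chi$ transfer (via fractional chromatic number and a direct analysis of the two-prover protocol), so if you are specifically reconstructing \cite{LY94} rather than the later streamlined version you may want to adjust the last step accordingly. Your identification of the main obstacle --- controlling $\chi$ from above in the YES case --- is exactly right.
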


%{\tony We mean that there exists $\epsilon > 0$ with the following property. Given an input $n$-vertex graph $G$, it is NP-hard to output a number $k$ such that $k \in [\frac{\chi(G)}{n^\epsilon}, n^\epsilon\chi(G)]$.  }

Our first theorem is a hardness result for approximating $\treechi$ and $\pathchi$.

\begin{theorem} \label{hardapprox}
There exists $\epsilon' > 0$, such that it is NP-hard to correctly determine $\treechi(G)$ within a multiplicative factor of $n^{\epsilon'}$ for every $n$-vertex graph $G$.  The same hardness result holds for $\pathchi$ with the same $\epsilon'$.
\end{theorem}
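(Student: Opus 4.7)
The plan is a gap-preserving reduction from approximating $\chi$, using the blowup $K_t^G$ and the sandwich provided by Lemma~\ref{blowup} to relate $\treechi(K_t^G)$ (and $\pathchi(K_t^G)$) to $\chi(G)$. Given an input graph $G$ on $n$ vertices, after preprocessing we may assume $G$ has no isolated vertices (since removing an isolated vertex does not change $\chi$ once $G$ has any edge, and $\chi(G) \in \{0,1\}$ is trivial otherwise). Let $H := K_n^G$, which has $N = n^2$ vertices and can be built in polynomial time. By Lemma~\ref{blowup} with $t = n$,
\[
(n-1)\chi(G) + 2 \;\leq\; \treechi(H) \;\leq\; \pathchi(H) \;\leq\; n\,\chi(G).
\]

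Suppose for contradiction that, for some $\epsilon' > 0$ to be chosen, there is a polynomial-time algorithm which on input $H$ outputs $V$ with $\treechi(H) \leq V \leq N^{\epsilon'}\, \treechi(H)$. The lower bound of the sandwich gives $\chi(G) \leq V/(n-1)$, while the upper bound combined with the approximation guarantee gives $\chi(G) \geq \treechi(H)/n \geq V/(n\cdot N^{\epsilon'}) = V/n^{1+2\epsilon'}$. Therefore $V/(n-1)$ is an approximation of $\chi(G)$ to within a multiplicative factor of
\[
\frac{n^{1+2\epsilon'}}{n-1} \;\leq\; 2 n^{2\epsilon'}
\]
for $n \geq 2$. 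Choosing $\epsilon' := \epsilon/3$, where $\epsilon$ is the constant supplied by Theorem~\ref{chihard}, this ratio is $o(n^\epsilon)$, contradicting Theorem~\ref{chihard} for all sufficiently large $n$. An identical argument, using the upper bound $\pathchi(H) \leq n\,\chi(G)$ and the same lower bound on $\treechi(H) \leq \pathchi(H)$, yields the hardness result for $\pathchi$ with the same $\epsilon'$.

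The main point requiring care is the quantitative calibration: the input size inflates from $n$ to $N = n^2$ under the blowup, so an $N^{\epsilon'}$-approximation translates to an $n^{2\epsilon'}$-approximation of $\chi(G)$ (up to constants). Any $\epsilon' < \epsilon/2$ suffices to preserve the gap from Theorem~\ref{chihard}; beyond that threshold the reduction would no longer be gap-preserving. One could make the exponent $\epsilon'$ larger at the cost of a smaller blowup (e.g.\ $t = n^\delta$ for suitable $\delta$), trading off the additive $+2$ in the lower bound against the ratio $t/(t-1)$, but this is a routine optimization rather than a substantive obstacle.
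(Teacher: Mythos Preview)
Your proof is correct and follows essentially the same approach as the paper: reduce from approximating $\chi$ via the blowup $H=K_n^G$ on $N=n^2$ vertices, apply the sandwich from Lemma~\ref{blowup}, and take $\epsilon'=\epsilon/3$ so that an $N^{\epsilon'}$-approximation of $\treechi(H)$ (or $\pathchi(H)$) yields an $n^\epsilon$-approximation of $\chi(G)$. Your version is slightly more careful with the additive constants and the preprocessing of isolated vertices, but the argument is the same.
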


\begin{proof}
We show the proof for $\treechi$. The proof for $\pathchi$ is identical.  Let $\epsilon'=\frac{\epsilon}{3}$, where $\epsilon$ is the constant from Theorem~\ref{chihard}. Let $G$ be an $n$-vertex graph.

Note that $K_n^G$ has $n^2$ vertices, and $(n^2)^{\epsilon'}=n^{\frac{2\epsilon}{3}}$.  If $k \in [\frac{\treechi(K_{n}^{G})}{n^{\frac{2\epsilon}{3}}}, n^{\frac{2\epsilon}{3}}\treechi(K_{n}^{G})]$, then 
$\frac{k}{n} \in [\frac{\chi(G)}{n^\epsilon}, n^\epsilon \chi(G)]$ by Lemma~\ref{blowup}.  Therefore, if we can approximate $\treechi(K_n^G)$ within a factor of $(n^2)^{\epsilon'}$, then we can approximate $\chi(G)$ within a factor of $n^\epsilon$.  
\end{proof}

For the decision problem, we use the following hardness result of Khanna, Linial, and Safra~\cite{KLS2000}.

\begin{theorem}[\cite{KLS2000}]
\label{chidecision}
Given an input graph $G$ with $\chi(G) \neq 4$, it is NP-complete to decide if $\chi(G) \leq 3$ or $\chi(G) \geq 5$.  
\end{theorem}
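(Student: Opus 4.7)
The plan is to obtain this as a hardness-of-approximation result via a reduction from a gap version of 3-SAT. By the PCP theorem, there is a constant $\epsilon > 0$ such that it is NP-hard to distinguish between satisfiable 3-SAT instances $\phi$ and instances in which no assignment satisfies more than a $(1-\epsilon)$ fraction of the clauses. From such an instance $\phi$, I would construct in polynomial time a graph $G(\phi)$ satisfying two guarantees: \emph{completeness}, meaning that if $\phi$ is satisfiable then $\chi(G(\phi)) \leq 3$, via a $3$-colouring built explicitly from a satisfying assignment; and \emph{soundness}, meaning that if $\phi$ is $\epsilon$-far from satisfiable then $\chi(G(\phi)) \geq 5$, because any $4$-colouring could be decoded into an assignment satisfying a forbidden fraction of the clauses.

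A natural starting point is an FGLSS-style construction, in which vertices correspond to pairs (clause, satisfying local assignment) and edges connect pairs with inconsistent assignments. On its own this yields only a weak chromatic gap, so to sharpen the gap to exactly $(3, 5)$ I would compose with a product construction (for instance, a tensor product with a carefully chosen base graph) or with a Long-Code style encoding of global assignments into local colour patterns. Completeness is the easy direction: a satisfying assignment to $\phi$ selects one vertex per clause, and consistency of the selection yields a proper $3$-colouring whose three colour classes correspond to the three values a variable can take in a canonical local encoding.

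The main obstacle is the soundness analysis. Ruling out $4$-colourings is intrinsically more delicate than ruling out $3$-colourings, since the extra colour provides slack that makes inconsistencies easier to avoid. The key technical step is to argue that any hypothetical $4$-colouring of $G(\phi)$ can be decoded (via majority or influence arguments on the Long Code/product coordinates) into a global assignment of $\phi$ satisfying more than a $(1-\epsilon)$ fraction of the clauses, contradicting the assumed gap. Calibrating the gadget parameters so that the chromatic gap lands at precisely $(3, 5)$ — rather than some weaker constant pair — is the hardest aspect of the construction, and it is exactly this fine tuning that makes it preferable to invoke the machinery of~\cite{KLS2000} rather than rederive the hardness from scratch.
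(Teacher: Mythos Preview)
The paper does not prove this theorem; it is quoted as a black-box result from Khanna, Linial, and Safra~\cite{KLS2000} and used only to derive Theorem~\ref{hardness}. So there is no argument in the paper to compare your proposal against.

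Your sketch, moreover, is not a proof: its final sentence explicitly says it is ``preferable to invoke the machinery of~\cite{KLS2000} rather than rederive the hardness from scratch,'' which is circular --- you are citing the very result you are meant to establish. The substantive steps are also missing. The FGLSS construction gives hardness for independent set, not for chromatic number, and the passage between them is itself non-trivial. The soundness direction --- ruling out $4$-colourings --- is the entire content of the theorem, and you offer no mechanism for it beyond the phrase ``majority or influence arguments.'' Calibrating the gap to land exactly at $(3,5)$ is not achieved by any generic product or Long-Code amplification; the Khanna--Linial--Safra proof uses a specific PCP tailored to this gap, and in fact a later, more elementary gadget reduction by Guruswami and Khanna avoids the PCP theorem entirely. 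If you want a self-contained argument you would need to reproduce one of those constructions rather than gesture at the general PCP toolkit.
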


As a corollary of Theorem~\ref{chidecision}, we obtain the following.

\begin{theorem} \label{hardness}
It is NP-complete to decide if $\treechi(G) \leq 6$.  It is also NP-complete to decide if $\pathchi(G) \leq 6$.
\end{theorem}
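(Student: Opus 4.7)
The plan is to give a Karp reduction from the Khanna--Linial--Safra promise problem (Theorem~\ref{chidecision}) using the blow-up construction of Lemma~\ref{blowup} at $t = 2$. Given an input graph $G$ with $\chi(G) \neq 4$, I would first delete any isolated vertices (which affect neither $\chi(G)$ nor the promise) and then output $K_2^G$. Lemma~\ref{blowup} gives
\[
\chi(G) + 2 \ \leq\ \treechi(K_2^G) \ \leq\ \pathchi(K_2^G) \ \leq\ 2\chi(G),
\]
so $\chi(G) \leq 3$ forces $\pathchi(K_2^G) \leq 6$ (and \emph{a fortiori} $\treechi(K_2^G) \leq 6$), while $\chi(G) \geq 5$ forces $\treechi(K_2^G) \geq 7$ (and \emph{a fortiori} $\pathchi(K_2^G) \geq 7$). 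A polynomial-time algorithm for either ``$\treechi(G) \leq 6$'' or ``$\pathchi(G) \leq 6$'' would therefore resolve the KLS dichotomy on $K_2^G$, yielding NP-hardness of both decision problems.

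For membership in NP, a yes-certificate is a tree- (respectively path-)decomposition $(T, \mathcal{B})$ together with an explicit proper $6$-colouring of each bag; this is polynomial-time verifiable provided $(T, \mathcal{B})$ has polynomial size. I would produce such a small decomposition by a smoothing step: repeatedly contract an edge $xy$ of $T$ whenever $B_x \subseteq B_y$ (which preserves validity and the chromatic number of every bag). Rooting the smoothed tree at any node with nonempty bag and, for each non-root node $t$, picking a vertex of $B_t$ lying outside its parent's bag (which exists precisely because of smoothing), gives an injection $V(T) \hookrightarrow V(G)$, so $|V(T)| \leq |V(G)|$.

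The core content of the theorem is the NP-hardness direction, and that is essentially a one-line consequence of Lemma~\ref{blowup} at $t = 2$: the threshold of $6$ in the statement is exactly the gap between $2 \cdot 3$ and $5 + 2$ produced by the lemma. No real obstacle arises here; the only mildly technical point is the smoothing argument for NP-membership, which is routine and closely analogous to the corresponding argument for treewidth.
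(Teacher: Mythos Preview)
Your proposal is correct and follows essentially the same approach as the paper: reduce from the Khanna--Linial--Safra promise problem via $K_2^G$ and invoke Lemma~\ref{blowup} at $t=2$, then certify membership in NP by exhibiting a decomposition together with $6$-colourings of the bags. The only difference is that you spell out a smoothing argument to bound $|V(T)|$ polynomially, whereas the paper leaves the polynomial size of the certificate implicit; your addition is a genuine (if routine) improvement in rigour, with the tiny caveat that your injection as stated only covers the non-root nodes, giving $|V(T)|\le |V(G)|+1$ rather than $|V(T)|\le |V(G)|$---harmless for the conclusion.
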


\begin{proof}
Let $G$ be a graph without isolated vertices and $\chi(G) \neq 4$.  By Lemma~\ref{blowup}, if $\treechi(K_2^G) \leq 6$, then $\chi(G) \leq 3$ and if $\treechi(K_2^G) \geq 7$, then $\chi(G) \geq 5$.  Same for $\pathchi$. Finally, a tree- or path-decomposition and a $6$-colouring of each bag is a certificate that $\treechi(G) \leq 6$ or $\pathchi(G) \leq 6$.
\end{proof}

Combining the standard $O(2^n)$-time dynamic programming for computing pathwidth exactly (see Section 3 of \cite{SV2009}) and the $2^n n^{O(1)}$-time algorithm of Bj\"{o}rklund, Husfeldt, and Koivisto~\cite{BHK09} for deciding if $\chi(G) \leq k$, yields a $4^n n^{O(1)}$-time algorithm to decide to $\pathchi(G) \leq k$. As far as we know, there is no faster algorithm for deciding $\pathchi(G) \leq k$ (except for small values of $k$, where faster algorithms for deciding $k$-colourability can be used instead of~\cite{BHK09}).  

Finally, unlike for $\chi(G)$, we conjecture that it is still NP-complete to decide if $\treechi(G) \leq 2$.

\begin{conjecture}
It is NP-complete to decide if $\treechi(G) \leq 2$.  It is also NP-complete to decide if $\pathchi(G) \leq 2$.
\end{conjecture}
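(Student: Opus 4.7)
The plan has two parts: membership in NP and NP-hardness. For membership, a $\treechi$-optimal tree-decomposition of $G$ can be assumed to have at most $O(n)$ bags, where $n = |V(G)|$: by Lemma~\ref{leafbag} each leaf bag contains a private vertex, bounding the number of leaves by $n$, and standard simplifications (absorbing internal nodes whose bag is contained in a neighbour's) bound the rest. A short certificate then consists of the tree, the bags, and an explicit $2$-colouring of each bag; the same works for $\pathchi$ with a path of length $O(n)$.

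A key structural observation for NP-hardness is that $\treechi(G) \leq 2$ forces $G$ to be triangle-free. Indeed, if $\{a,b,c\}$ induces a triangle in $G$, then in any tree-decomposition the subtrees $S_a, S_b, S_c$ pairwise intersect (to cover the edges $ab$, $ac$, $bc$), and by the Helly property for subtrees of a tree they share a common node $t$, yielding $B_t \supseteq \{a,b,c\}$ and hence $\chi(B_t) \geq 3$. Thus $\treechi(K_3) = 3$, and by subgraph monotonicity any reduction must start from triangle-free graphs. The plan is therefore to reduce from $3$-colouring of triangle-free graphs, which is known to be NP-hard.

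Given a triangle-free instance $H$, we aim to construct a triangle-free graph $G_H$ of polynomial size such that $\treechi(G_H) \leq 2$ if and only if $H$ is $3$-colourable. The construction would attach to each vertex $v \in V(H)$ a \emph{colour gadget} $X_v$: a triangle-free graph with exactly three canonical bipartite tree-decompositions, naturally corresponding to the three colours, and to each edge $uv \in E(H)$ an \emph{edge gadget} whose bipartite tree-decomposability forces the canonical decompositions chosen at $u$ and $v$ to differ. Natural candidates for $X_v$ are small blow-ups of odd cycles, whose bipartite tree-decompositions can be explicitly classified.

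The main obstacle will be designing gadgets rigid enough to rule out ``cross-gadget'' bipartite bags that short-circuit the intended encoding, while remaining polynomial-size and triangle-free. The central lemma to establish is that in any bipartite tree-decomposition of $G_H$, the restriction to each $X_v$ is essentially one of the canonical decompositions; this would rely on the Helly property, subtree connectivity, and a careful cut-set analysis at the gadget interfaces. For $\pathchi \leq 2$, the same scheme should work in principle, but the linear structure of path-decompositions imposes additional constraints: the edge gadgets likely need to be linearised, or one might instead reduce from a linearly-structured hard problem such as an interval-constrained variant of hypergraph $2$-colouring.
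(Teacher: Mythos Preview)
The statement you are attempting to prove is presented in the paper as an open \emph{conjecture}; the authors give no proof and explicitly flag it as unresolved. There is therefore nothing in the paper to compare your proposal against.

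As for the proposal itself, it is not a proof but a research plan, and you acknowledge as much. The NP-membership sketch is essentially fine (optimal tree- and path-decompositions can indeed be taken to have $O(|V(G)|)$ nodes by the standard reduction of absorbing redundant bags). Your structural observation that $\treechi(G)\le 2$ forces $G$ to be triangle-free is correct and is a sensible starting point, as is the choice to reduce from $3$-colouring of triangle-free graphs.

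The genuine gap is exactly where you locate it: no concrete colour gadget or edge gadget is given, and the heart of any such reduction is precisely the rigidity lemma you describe---that every bipartite tree-decomposition of $G_H$ restricts to one of three canonical decompositions on each $X_v$. Vague candidates such as ``small blow-ups of odd cycles'' are not enough; odd-cycle blow-ups typically admit many bipartite tree-decompositions beyond three canonical ones, and controlling cross-gadget bags is notoriously delicate because tree-decompositions have a great deal of non-local freedom. Until an explicit gadget is exhibited and its bipartite tree-decompositions are fully classified, the argument has no content beyond identifying the right reduction source. Since the paper leaves this open, a complete construction here would be a new result, not a reproduction of the paper's proof.
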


\bibliographystyle{abbrv}
\bibliography{references}

\end{document}